\theoremstyle{plain}
\newtheorem{prop}{Proposition}[section]
\newtheorem{Thm}[prop]{Theorem}
\newtheorem{lemma}[prop]{Lemma}
\theoremstyle{definition}
\theoremstyle{remark}
\newtheorem{rem}[prop]{Remark}
\newcommand{\IN}{\mathbb{N}}
\newcommand{\IR}{\mathbb{R}}
\newcommand{\M}{{{\mathcal M}}}
\newcommand{\C}{\mathcal{C}}
\newcommand{\abs}[1]{\mathopen\vert#1\mathclose\vert}
\newcommand{\norm}[1]{\mathopen\Vert#1\mathclose\Vert}
\newcommand{\intd}{\,{\mathrm d}}
\renewcommand{\phi}{\varphi}
\renewcommand{\epsilon}{\varepsilon}
\renewcommand{\le}{\leqslant}
\renewcommand{\subset}{\subseteq}
\definecolor{umhblue}{rgb}{.69,.75,.86}
\definecolor{umhdarkblue}{rgb}{.15,.15,.53}
\definecolor{umhred}{rgb}{0.4,.0,.0}
\definecolor{mygreen}{RGB}{0, 165, 0}
\definecolor{definition}{rgb}{0.45, 0.61, 0.96}
\definecolor{problem}{rgb}{0.84,0.5,0}
\providecommand{\meshline}[5]{%
  \begin{pgfscope}
    \pgfsetcolor{#1}
    \pgfpathmoveto{\pgfpointxy{#2}{#3}}
    \pgfpathlineto{\pgfpointxy{#4}{#5}}
    \pgfusepath{stroke}
  \end{pgfscope}}
\begin{document}

\title[Nonlinear Schrödinger problems]{Nonlinear Schrödinger problems:
  symmetries of some variational solutions} 

\author[Ch.~Grumiau]{Christopher Grumiau}

\address{
  Institut de Math{\'e}matique\\
  Universit{\'e} de Mons, Le Pentagone\\
  20, Place du Parc, B-7000 Mons, Belgium}
\email{Christopher.Grumiau@umons.ac.be}

\thanks{The author is partially supported by a grant from the
  National Bank of Belgium and  by the program 
``Qualitative study of
 solutions of variational elliptic partial differential equations. Symmetries,
bifurcations, singularities, multiplicity and numerics'' of the FNRS, project 
2.4.550.10.F of the Fonds de la Recherche Fondamentale Collective. }

\begin{abstract}
In this paper, we are interested in the nonlinear Schrödinger problem 
$-\Delta u + Vu = \abs{u}^{p-2}u$ submitted to the
 Dirichlet boundary conditions. We consider  $p>2$ and we are working
 with an open bounded domain $\Omega\subset\IR^N$ ($N\geq 2$).
 Potential $V$  satisfies $\max(V,0)\in L^{N/2}(\Omega)$ and $\min(V,0)\in
 L^{+\infty}(\Omega)$. Moreover, $-\Delta + V$ is  
positive definite and has one and only one principal eigenvalue. 
When $p\simeq 2$, we prove the uniqueness of the solution once we
fix the projection on an eigenspace of $-\Delta + V$. It implies partial 
symmetries (or symmetry breaking) for ground state and least 
energy nodal solutions. In the litterature, the case $V\equiv 0$ has
already been studied. Here, we generalize the technique at our case by pointing
out and explaining differences. To finish, as
illustration, we   implement the (modified) mountain pass algorithm 
to work with $V$ negative, piecewise constant or not bounded. It
permits us to exhibit direct examples where the
solutions break down  the symmetries of $V$.
\end{abstract}

\subjclass{primary 35J20; secondary 35A30}
\keywords{Nonlinear Schrödinger problems, ground
  state solutions, least energy nodal solutions, (nodal) Nehari
  set, mountain pass algorithm}

\maketitle

\section{Introduction}
Let $N\geq 2$, $p>2$, $\lambda>0$ and an open bounded domain 
$\Omega\subset\IR^N $. We  study the nonlinear Schrödinger problem 
\begin{equation}
\label{eqp}
-\Delta u(x) + V(x)u(x) =  \lambda|u(x)|^{p-2}u(x)
\end{equation}
submitted to the Dirichlet boundary conditions (DBC).  We are
interested in the symmetry of solutions. 

When $V$ 
belongs to $L^{N/2}(\Omega)$, the solutions can be defined as  the critical 
points of the energy functional 
\begin{equation*}
\mathcal{E}_p: H^1_0
(\Omega)\to\IR : u\mapsto \frac{1}{2}\int_{\Omega}\abs{\nabla u}^2 + Vu^2 
-\frac{\lambda}{p}\int_{\Omega}\abs{u}^p.
\end{equation*}

Clearly, $0$ is solution. Concerning other solutions, if we assume
that $-\Delta + V$ is 
positive definite and $V^-:=\min(V,0)\in L^{+\infty}(\Omega)$, then the norm 
$\norm{u}^2=\int_{\Omega}\abs{\nabla u}^2 + Vu^2$ defined on 
$H^1_0(\Omega)$ is equivalent to the traditional norm $\norm{u}^2_{H^1_0}=
\int_{\Omega}\abs{\nabla u}^2$ (see Proposition~\ref{equiv}). By
working in the same way as in~\cite{neuberger}, it directly implies the 
existence of ground state solutions (g.s.) and least energy 
nodal solutions (l.e.n.s.); i.e.\ one-signed (resp.\ sign-changing) solutions 
with minimal energy. These solutions are 
characterized as minima of $\mathcal{E}_p$ respectively on  the
(resp.\ nodal) Nehari set
\[\mathcal{N}_p:=\left\{u\in H^1_0(\Omega)\setminus\{0\}\bigm\vert 
\int_{\Omega}\abs{\nabla u}^2 + Vu^2 =\lambda\int_{\Omega}\abs{u}^p\right\}\]
(resp.\ $\M_p:=\{u : u^\pm\in \mathcal{N}_p\}$). The Morse index is $1$
(resp.\ $2$).

In this paper, we study the structure of these two types of
solutions.  We verify  whether they
are odd or even with respect to the hyperplanes leaving $V$ invariant (i.e.\ 
$V$ respects an orthogonal symmetry with respect to the hyperplane).
When it is the case, we say that the solution respects the symmetries
of $V$. When $V\equiv 0$, this type of questions has already been
studied. First, 
on the square in 
dimension $2$, we can mention a result of 
G.~Arioli and H.~Koch (see~\cite{arioli}). They proved the existence of a
positive symmetric 
$\C^{\infty}$-function $w$ such that $-\Delta u = wu^3$ possesses
a non-symmetric positive solution (with $1$ as Morse index). The same
kind of result has also been obtained for a solution with $2$ as Morse
index. The proof is partially computer-assisted. Second, in collaboration 
with D.~Bonheure, V.~Bouchez, C.~Troestler and J.~Van~Schaftingen 
(see~\cite{bbg,bbgv,gt}), we proved for $p$ close to $2$ that the
symmetries are related to 
the symmetries of eigenfunctions of $-\Delta$. We generalize here the
technique at some non-zero potentials $V$  and we make 
numerical  experiments to illustrate it.

In
Section~\ref{sec:abst-sym}, by denoting 
$\lambda_{i}$ (resp.\ $E_i$) the distinct eigenvalues (resp.\ eigenspaces) of 
$-\Delta+ V$ with DBC in $H^1_0(\Omega)$, we prove the following
Theorem~\ref{intro}. For 
this, we assume that  $\lambda_1$ is the unique principal  
eigenvalue, i.e.\ an eigenvalue with a related eigenspace of dimension $1$ possessing an
one-signed eigenfunction. We also require  
that  eigenfunctions in $E_2$ have a nodal line of measure $0$. By
using a maximum principle, these
assumptions are  
satisfied at least when $V\in L^{+\infty}(\Omega)$ (see~\cite{gossez}).
\begin{Thm}
\label{intro}
When $V\in L^{N/2}(\Omega)$, $V^-\in L^{+\infty}(\Omega)$ and $-\Delta
+ V$ is positive definite such that $\lambda_1$ is the unique
principal eigenvalue, for $p$ 
close to $2$,  the ground state\  (resp.\ least energy nodal) solutions   
respect the symmetries of their orthogonal projections in
$H^1_0(\Omega)$ on $E_1$ (resp.\ $E_2$).
\end{Thm}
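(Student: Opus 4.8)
The plan is to combine an \emph{equivariance} argument with a \emph{local uniqueness} result valid for $p$ close to $2$, the latter obtained through a Lyapunov--Schmidt reduction around the linear problem at $p=2$. Throughout, let $\sigma$ denote an orthogonal reflection with respect to a hyperplane leaving both $\Omega$ and $V$ invariant, and set $T_\sigma u \eqdef u\circ\sigma$. Since $V\circ\sigma=V$ and $\sigma$ preserves $\Omega$, the map $T_\sigma$ is a linear isometry of $(H^1_0(\Omega),\norm{\cdot})$, it leaves $\mathcal{E}_p$ invariant, and it maps $\mathcal{N}_p$ onto $\mathcal{N}_p$ and $\M_p$ onto $\M_p$. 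Hence $T_\sigma$ sends ground state solutions to ground state solutions and least energy nodal solutions to least energy nodal solutions. Because $T_\sigma$ commutes with $-\Delta+V$, it preserves every eigenspace $E_i$ and commutes with the orthogonal projection $P_i$ onto $E_i$; in particular $P_i(T_\sigma u)=T_\sigma(P_i u)$. Saying that $u$ \emph{respects the symmetry} $\sigma$ means $T_\sigma u=u$ or $T_\sigma u=-u$.

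The first analytic step is the asymptotic description of the solutions as $p\to 2^{+}$. After the natural rescaling on the Nehari set, I would show that the normalized ground states converge strongly in $H^1_0(\Omega)$ to a positive principal eigenfunction spanning $E_1$, while the normalized least energy nodal solutions converge to an eigenfunction lying in $E_2$. Here the hypotheses enter crucially: $V\in L^{N/2}(\Omega)$ makes the quadratic form and $\mathcal{E}_p$ well defined, the embedding $H^1_0(\Omega)\hookrightarrow L^p(\Omega)$ is compact for $p$ slightly above $2$, and Proposition~\ref{equiv} gives the norm equivalence needed to extract strong limits. The Rayleigh quotient and minimax characterizations of $\lambda_1$ and $\lambda_2$ then identify the limiting eigenvalues, while the assumptions that $\lambda_1$ be the unique principal eigenvalue and that functions in $E_2$ have nodal line of measure $0$ pin the limits down to $E_1$ (one-signed, Morse index $1$) and to $E_2$ (two nodal domains, Morse index $2$), respectively.

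The heart of the matter is the local uniqueness. Writing a solution as $u=s\,(e+w)$ with $e\in E_i$ normalized, $w\in E_i^{\perp}$ and $s>0$ a scale, I would insert this ansatz into~\eqref{eqp} and project onto $E_i^{\perp}$. Since $-\Delta+V-\lambda_i$ is boundedly invertible on $E_i^{\perp}$, the implicit function theorem applied at $p=2$ (where $w=0$ solves the projected equation) yields, for $p$ in a one-sided neighbourhood of $2$, a unique small solution $w=w(e,s,p)$ depending smoothly on the data with $w\to 0$ as $p\to 2$. This reduction is precisely the uniqueness announced in the abstract: for $p$ close to $2$, a ground state (resp.\ least energy nodal) solution is uniquely determined by its orthogonal projection $P_i u$ onto $E_i$.

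It then remains to propagate symmetry. Fix $\sigma$ as above. For a ground state $u_p$, the projection $P_1 u_p$ is a multiple of the positive eigenfunction spanning the one-dimensional space $E_1$; as $T_\sigma$ acts on $E_1$ by $\pm 1$ and preserves positivity, it must act by $+1$, so $P_1 u_p$ is $\sigma$-even. For a least energy nodal solution $u_p$, suppose $T_\sigma(P_2 u_p)=\epsilon P_2 u_p$ with $\epsilon\in\{-1,+1\}$. In either case $T_\sigma u_p$ and $\epsilon u_p$ are solutions of the same variational type (using that the equation is odd and that $T_\sigma$ preserves the classes) and they share the same projection onto $E_i$, since $P_i(T_\sigma u_p)=T_\sigma(P_i u_p)=\epsilon P_i u_p=P_i(\epsilon u_p)$. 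The uniqueness from the reduction then forces $T_\sigma u_p=\epsilon u_p$, that is, $u_p$ respects the symmetries of its projection. I expect the main obstacle to be the reduction step for a general $V$: one must verify the nondegeneracy making the reduced map a local diffeomorphism, which in the nodal case is delicate because $E_2$ may be higher-dimensional and the limiting eigenfunction must be shown to sit exactly in $E_2$ with the correct nodal structure.
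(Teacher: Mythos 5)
Your overall architecture --- equivariance of the problem under reflections preserving $V$, an asymptotic analysis identifying the limits of rescaled solutions in $E_1$ (resp.\ $E_2$), and a ``solutions are determined by their projection on $E_i$'' uniqueness statement that propagates the symmetry of the projection to the solution itself --- is exactly the architecture of the paper (Theorem~\ref{thmf} is the formalization of your $T_\sigma$ argument, and Theorem~\ref{result} is your asymptotic step). Where you genuinely diverge is in the proof of the uniqueness step. You propose a Lyapunov--Schmidt reduction: write $u=s(e+w)$ with $e\in E_i$, $w\in E_i^\perp$, and apply the implicit function theorem at $p=2$, $w=0$, using the invertibility of $-\Delta+V-\lambda_i$ on $E_i^\perp$. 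The paper (following Proposition~$3.2$ of~\cite{bbgv}) instead takes two solutions $u_p,v_p$ with equal projections, observes that their difference solves the \emph{linear} equation $-\Delta w+Vw=a(x)w$ with $a(x)=\lambda_i(p-1)\int_0^1\abs{tu_p+(1-t)v_p}^{p-2}\intd t$, shows $\norm{a-\lambda_i}_{L^{N/2}}\to 0$ uniformly on bounded sets of solutions staying away from $0$, and invokes Lemma~\ref{Lem:uniq} to conclude $w=0$ since $P_{E_i}w=0$. The trade-off: the paper's argument needs no differentiability of the Nemytskii map $u\mapsto\abs{u}^{p-2}u$ and no smallness of the orthogonal components --- only a priori bounds and non-degeneracy of the projection --- and so yields uniqueness in a fixed annulus $\{u\in B(0,M):P_{E_i}u\notin B(0,1/M)\}$ at once. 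Your reduction requires verifying $C^1$ dependence jointly in $(p,w)$ (the $p$-derivative produces the logarithmic terms $\abs{u}^{p-2}u\log\abs{u}$), uniformity of the inverse over the unit sphere of the finite-dimensional $E_i$ and over a compact range of $s$, and, crucially, the additional input that the actual solutions' components $w$ fall inside the ball where the implicit function theorem gives uniqueness; that last point is not automatic and must be extracted from the strong convergence of the rescaled solutions to $u_*\in E_i$, i.e.\ from the same asymptotic analysis. So your route can be made to work, but it is locally valid only, and the nondegeneracy worry you flag at the end is resolved more cheaply by the paper's linearized-difference argument, which sidesteps the reduction entirely. Both approaches use the same hypotheses in the same places (uniqueness of the principal eigenvalue for the lower bound in the nodal case, the nodal-line condition to ensure the limit sits in $E_2$ with two nodal domains), and your symmetry-propagation step for $E_1$ (the reflection must act by $+1$ on the positive generator) and for $E_2$ (via $P_i(T_\sigma u)=T_\sigma(P_iu)$) is correct and coincides with the paper's use of Theorem~\ref{thmf}.
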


In particular, when the eigenspace has a dimension $1$, the solutions
respect the symmetries of $V$. As we assumed that $\lambda_1$ is the
unique principal eigenvalue, ground state solutions respect the
symmetries of $V$. 

Depending on the structure of $E_2$, 
some symmetry breaking exist for  
least energy nodal solutions (see Section~\ref{symbr}). 
In fact, by a traditional bootstrap, a family of ground state (resp.\
least energy nodal) 
solutions $(u_p)_{p>2}$  converges for 
$\C$-norm to functions in $E_1$ (resp.\ $E_2$). So, for
l.e.n.s., $u_p$ does not    
respect the symmetries of $V$ for $p$ small when  the projection is not
symmetric in $E_2$ (see Section~\ref{exp} for an example). For larger $p$, it
is depending on the case. In
Section~\ref{symbr},  we exhibit rectangles and $V$ (such that the
eigenfunctions in $E_2$ are symmetric) where   
l.e.n.s.\ do not respect symmetries of $V$ for $p$ large enough.  So,
the result~\ref{intro} cannot be extended to all $p$.

In Section~\ref{exp}, as illustration, we implement the (modified) mountain
pass algorithm (see~\cite{mckenna,zhou1,zhou2}) to  
study the cases of $V$ negative constant, piecewise constant or
singular. We exhibit direct examples such that the 
solutions break down  the symmetries of $V$. 

\section{Main results}\label{sec:abst-sym}

The proofs are related to the technique defined in~\cite{bbgv}. This is why we 
just point out and explain the differences and we do not make all the details.
The first result implies that the traditional Poincaré's and 
Sobolev's inequalities are available for 
$\norm{\cdot}^2:=\int_{\Omega}\abs{\nabla \cdot}^2 + V(\cdot)^2$. 
\begin{prop}
\label{equiv}
If $-\Delta +V$ is positive definite, $V^+\in L^{N/2}(\Omega)$ and 
$V^-\in L^{+\infty}(\Omega)$, the norm  $\norm{u}^2:=\int_{\Omega}\abs{\nabla u}^2 + 
Vu^2$ and the traditional norm 
$\norm{u}_{H^1_0}^2:=\int_{\Omega}\abs{\nabla u}^2$ are equivalent. 
\end{prop}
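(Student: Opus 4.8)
The plan is to establish the two-sided estimate $c\,\norm{u}_{H^1_0}^2 \le \norm{u}^2 \le C\,\norm{u}_{H^1_0}^2$ for suitable constants $c,C>0$, treating the two inequalities separately. Throughout I write $V=V^++V^-$ with $V^+=\max(V,0)\ge 0$ and $V^-=\min(V,0)\le 0$, so that $\norm{u}^2=\norm{u}_{H^1_0}^2+\int_\Omega V^+u^2+\int_\Omega V^-u^2$. The point is that the positive part $V^+$ governs the upper bound, while the negative part $V^-$ is the sole obstruction to the lower bound.

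For the upper bound I would discard the nonpositive term $\int_\Omega V^-u^2$ and estimate the $V^+$ contribution by H\"older's inequality with conjugate exponents $N/2$ and $N/(N-2)$, giving $\int_\Omega V^+u^2\le \norm{V^+}_{L^{N/2}}\,\norm{u}_{L^{2^*}}^2$ with $2^*=2N/(N-2)$. The Sobolev embedding $H^1_0(\Omega)\hookrightarrow L^{2^*}(\Omega)$ (valid for $N\ge 3$; for $N=2$ one pairs instead with a finite subcritical exponent) then yields $\int_\Omega V^+u^2\le C_S^2\,\norm{V^+}_{L^{N/2}}\,\norm{u}_{H^1_0}^2$, whence $\norm{u}^2\le\bigl(1+C_S^2\norm{V^+}_{L^{N/2}}\bigr)\norm{u}_{H^1_0}^2$. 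This is exactly where the hypothesis $V^+\in L^{N/2}(\Omega)$ is used: it matches the critical Sobolev exponent.

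For the lower bound the difficulty is that $\int_\Omega V^-u^2$ can be negative and is not controlled by $\norm{u}_{H^1_0}$. I would use positive definiteness quantitatively: since $V^+\ge 0$ and $V^-\in L^{\infty}(\Omega)$, the quadratic form is closed and bounded below on $H^1_0(\Omega)$, $-\Delta+V$ has discrete spectrum (via the compact embedding $H^1_0\hookrightarrow L^2$), and positive definiteness means its bottom eigenvalue $\lambda_1$ is strictly positive, so that $\norm{u}^2\ge\lambda_1\norm{u}_{L^2}^2$. The trick is a convex combination: for $\theta\in(0,1)$, dropping $\int_\Omega V^+u^2\ge 0$,
\[
\norm{u}^2=\theta\norm{u}^2+(1-\theta)\norm{u}^2\ge\theta\lambda_1\norm{u}_{L^2}^2+(1-\theta)\norm{u}_{H^1_0}^2+(1-\theta)\int_\Omega V^-u^2.
\]
Bounding $\int_\Omega V^-u^2\ge-\norm{V^-}_{L^{\infty}}\norm{u}_{L^2}^2$ gives
\[
\norm{u}^2\ge(1-\theta)\norm{u}_{H^1_0}^2+\bigl(\theta\lambda_1-(1-\theta)\norm{V^-}_{L^{\infty}}\bigr)\norm{u}_{L^2}^2.
\]
Choosing $\theta=\norm{V^-}_{L^{\infty}}/(\lambda_1+\norm{V^-}_{L^{\infty}})<1$ makes the $L^2$-coefficient vanish, leaving $\norm{u}^2\ge(1-\theta)\norm{u}_{H^1_0}^2$ with $1-\theta=\lambda_1/(\lambda_1+\norm{V^-}_{L^{\infty}})>0$.

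I expect the main obstacle to be the lower bound: one must extract a genuinely positive coercivity constant from the positivity of the form, and this is precisely what the convex-combination balancing of the spectral gap $\lambda_1$ against $\norm{V^-}_{L^{\infty}}$ achieves. Alternatively, the lower bound can be obtained by a contradiction argument on a sequence $u_n$ with $\norm{u_n}_{H^1_0}=1$ and $\norm{u_n}^2\to 0$; there the delicate point is passing to the limit in $\int_\Omega V^+u_n^2$, which fails Rellich compactness at the critical exponent and must be handled by truncating $V^+$ at a level $M$ and exploiting the smallness of $\norm{(V^+-M)^+}_{L^{N/2}}$. In either route the $V^-$ term is harmless because $V^-\in L^{\infty}$.
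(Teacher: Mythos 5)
Your argument is correct and follows essentially the same route as the paper: the upper bound by H\"older and the Sobolev embedding applied to the $V^+$ term (using $V^+\in L^{N/2}$), and the lower bound by splitting $\norm{u}^2$ into a convex combination, using the quantitative positive definiteness $\norm{u}^2\ge c\,\norm{u}_{L^2}^2$ on one piece and the bound $\int_\Omega V^-u^2\ge-\norm{V^-}_{L^\infty}\norm{u}_{L^2}^2$ on the other, then tuning the weight so the $L^2$ terms cancel. Your choice $\theta=\norm{V^-}_{L^\infty}/(\lambda_1+\norm{V^-}_{L^\infty})$ is exactly the paper's ``$\varepsilon$ small enough'' made explicit (with $\varepsilon=1-\theta$), so the two proofs coincide.
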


\begin{proof}
Using the Sobolev's inequalities on $\int_{\Omega}Vu^2$ and as $V\in
L^{N/2}(\Omega)$,  $\exists C>0$ such that 
\begin{equation*}
\norm{u}^2 \leq  \int_{\Omega} \abs{\nabla u}^2 + C \int_{\Omega}
\abs{\nabla u}^2.
\end{equation*}

Using the Poincaré's inequalities and as 
$V^-\in L^{+\infty}(\Omega)$, $\exists C>0$ and a real $K$ such that
\begin{equation*}
\begin{split}
\norm{u}^2 &= \varepsilon \int_{\Omega}\abs{\nabla u}^2 + (1-\varepsilon) 
\norm{u}^2  + \varepsilon \int_{\Omega} Vu^2\\
& \geq  \varepsilon\int_{\Omega} \abs{\nabla u}^2
 + ((1 -\varepsilon)C + \varepsilon K)\int_{\Omega}u^2\geq \varepsilon 
\int_{\Omega} 
\abs{\nabla u}^2,
\end{split}
\end{equation*}
where the last inequality is obtained for $\varepsilon$ small enough. 
\end{proof}

Then, the proof of Theorem~\ref{intro} is based on two main results. 
The first one shows that, for $p\simeq 2$, a priori bounded solutions
can be distinguished by their projections on $E_i$. 
\subsection{Abstract symmetry}
\begin{lemma}\label{Lem:uniq}
There exists $\varepsilon >0$ such that  if $\|a(x)-\lambda_{i}\|_{L^{N/2}}<
\varepsilon$ and $u$ solves $-\Delta u + Vu= a(x)u$ with DBC then $u=0$ or 
$P_{E_{i}}u \neq 0$.
\end{lemma}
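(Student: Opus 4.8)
The plan is to prove the contrapositive: assuming $u\neq 0$ solves $-\Delta u+Vu=a(x)u$ with $P_{E_i}u=0$, I will force $\norm{a-\lambda_i}_{L^{N/2}}$ to stay bounded below, so that choosing $\varepsilon$ below that bound excludes this case. Write $f\eqdef a-\lambda_i$ and let $(e_k)_k$ be an $L^2(\Omega)$-orthonormal basis of eigenfunctions of $-\Delta+V$, with eigenvalues $(\mu_k)_k$, so that $u=\sum_k c_k e_k$ with $\norm{u}_{L^2}^2=\sum_k c_k^2$ and $\norm{u}^2=\sum_k\mu_k c_k^2$. Since $\Omega$ is bounded, $H^1_0(\Omega)\hookrightarrow L^2(\Omega)$ is compact, the spectrum is discrete, and the gap $\delta\eqdef\min_{\mu_k\neq\lambda_i}\abs{\mu_k-\lambda_i}$ is strictly positive. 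The hypothesis $P_{E_i}u=0$ says exactly that $c_k=0$ whenever $\mu_k=\lambda_i$, so only modes with $\abs{\mu_k-\lambda_i}\ge\delta$ occur in $u$.

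The heart of the argument is to extract two complementary estimates from the weak formulation $\int_\Omega\nabla u\cdot\nabla v+Vuv-\lambda_i uv=\int_\Omega fuv$, valid for every $v\in H^1_0(\Omega)$. For the first, split $u=u_++u_-$ according to the sign of $\mu_k-\lambda_i$ and test against $w\eqdef u_+-u_-$; the left-hand side collapses to $\sum_{\mu_k\neq\lambda_i}\abs{\mu_k-\lambda_i}c_k^2\ge\delta\norm{u}_{L^2}^2$, while one checks $\norm{w}=\norm{u}$. Estimating the right-hand side by the three-factor H\"older inequality with exponents $\tfrac{N}{2},2^\ast,2^\ast$ (where $2^\ast=\tfrac{2N}{N-2}$, so that $\tfrac2N+\tfrac2{2^\ast}=1$) together with the Sobolev embedding $H^1_0(\Omega)\hookrightarrow L^{2^\ast}(\Omega)$ and Proposition~\ref{equiv} gives a constant $C>0$ with $\abs{\int_\Omega fuw}\le C\norm{f}_{L^{N/2}}\norm{u}\,\norm{w}=C\norm{f}_{L^{N/2}}\norm{u}^2$, hence the first estimate $\delta\norm{u}_{L^2}^2\le C\norm{f}_{L^{N/2}}\norm{u}^2$. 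For the second, I test against $v=u$: then $\norm{u}^2=\int_\Omega fu^2+\lambda_i\norm{u}_{L^2}^2$, and the same H\"older/Sobolev bound on $\int_\Omega fu^2$ gives $\norm{u}^2\le C\norm{f}_{L^{N/2}}\norm{u}^2+\lambda_i\norm{u}_{L^2}^2$.

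Combining the two is what closes the argument. The second estimate rearranges, for $C\norm{f}_{L^{N/2}}<1$, into $\norm{u}^2\le\frac{\lambda_i}{1-C\norm{f}_{L^{N/2}}}\norm{u}_{L^2}^2$; substituting this into the first and dividing by $\norm{u}_{L^2}^2>0$ (nonzero since $u\neq0$ in $H^1_0(\Omega)$) leaves $\delta\le\frac{C\lambda_i\norm{f}_{L^{N/2}}}{1-C\norm{f}_{L^{N/2}}}$. The right-hand side tends to $0$ as $\norm{f}_{L^{N/2}}\to0$, so any $\varepsilon>0$ small enough that this bound stays below $\delta$ produces the desired contradiction, giving $u=0$.

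I expect the main obstacle to be precisely the need for this two-estimate combination rather than a single inequality. The first estimate alone only controls $\norm{u}_{L^2}^2$ by $\norm{f}_{L^{N/2}}\norm{u}^2$, and because of the high-frequency modes the ratio $\norm{u}_{L^2}^2/\norm{u}^2$ can be arbitrarily small; it is therefore consistent with $\norm{f}_{L^{N/2}}$ small and does not conclude. Reconciling the $L^2$ and $\norm{\cdot}$ (equivalently $H^1_0$) scales through the energy identity is the key point. Two further technical checks deserve attention: that the H\"older exponents match exactly because the datum lies in $L^{N/2}$ (so the borderline case $N=2$, where $L^{N/2}=L^1$, must be treated separately, e.g.\ via the embeddings into $L^q$ for suitable finite $q$), and that $\delta>0$ genuinely relies on discreteness of the spectrum and on $\lambda_i$ being a fixed isolated eigenvalue.
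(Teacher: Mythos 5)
Your proof is correct, and its skeleton --- assume $P_{E_i}u=0$, expand $u$ in eigenfunctions of $-\Delta+V$, test the equation against the ``sign-flipped'' function $w=u_+-u_-$, and control $\int_\Omega(a-\lambda_i)uw$ by the H\"older/Sobolev estimate that Proposition~\ref{equiv} makes available for $\norm{\cdot}$ --- is exactly the mechanism behind the paper's one-line proof, which simply defers to Lemma~3.1 of~\cite{bbgv}. Where you genuinely diverge is in how coercivity is extracted from the left-hand side. You use the \emph{additive} spectral gap $\abs{\mu_k-\lambda_i}\ge\delta$, which only yields $\delta\norm{u}_{L^2}^2$ and therefore forces your second estimate (testing against $u$ itself) to reconcile the $L^2$ and $H^1_0$ scales. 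The argument in~\cite{bbgv} instead uses the \emph{multiplicative} gap: for $\mu_k\ge\lambda_{i+1}$ one has $\mu_k-\lambda_i\ge(1-\lambda_i/\lambda_{i+1})\mu_k$, and for $0<\mu_k\le\lambda_{i-1}$ one has $\lambda_i-\mu_k\ge(\lambda_i/\lambda_{i-1}-1)\mu_k$, so the tested left-hand side is bounded below by $c\norm{u}^2$ with $c=\min(1-\lambda_i/\lambda_{i+1},\,\lambda_i/\lambda_{i-1}-1)>0$, and one concludes in a single step from $c\norm{u}^2\le C\varepsilon\norm{u}^2$. Your detour costs nothing in correctness but is avoidable; conversely, your version makes explicit a point the short proof hides, namely that the raw gap $\delta$ alone does not close the argument because $\norm{u}_{L^2}^2/\norm{u}^2$ can be arbitrarily small. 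Your closing caveat about $N=2$ is well taken: there $L^{N/2}=L^1$ and the three-factor H\"older step fails as stated; this is a defect of the paper's hypotheses (it already affects Proposition~\ref{equiv}) rather than of your argument, and is repaired by assuming $a-\lambda_i$ small in $L^p$ for some $p>1$ and using $H^1_0(\Omega)\hookrightarrow L^q(\Omega)$ for all finite $q$.
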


\begin{proof}
Similar as in Lemma~$3.1$ in~\cite{bbgv}, Poincar\'e's and Sobolev's inequalities 
are adapted using Proposition~\ref{equiv}. 
\end{proof}

Then, we directly obtain our abstract symmetry result as in the proof of 
Proposition~$3.2$ in~\cite{bbgv}. Let us remark that the result holds
for any $i$ and not just for $i=1$ or $2$ as stated in~\cite{bbgv}.  
We denote by $B(0,M)$ the ball in $H^1_0(\Omega)$ centered at $0$ and
radius $M$.

\begin{prop}\label{Prop-intro:uniqueness}
Let $M > 0$.  For $i\in\IN_0,
\exists\tilde{p} > 2$ such that, for 
$p \in (2, \tilde{p})$, if $u_p, v_p\in \{ u\in B(0,M): P_{E_i}u\notin
B(0, \frac{1}{M})\}$ solve the boundary value
problem with DBC $-\Delta u + Vu = \lambda_i \abs{u}^{p-2}u$ then
$P_{E_i}u_p = P_{E_i}v_p$ implies $u_p=v_p$. 
\end{prop}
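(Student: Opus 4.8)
The plan is to establish uniqueness by a rescaling argument that reduces the problem to an eigenvalue equation as $p \to 2^+$, then to invoke the preceding lemma to control the projection. I would fix $M > 0$ and argue by contradiction: suppose that for some sequence $p_n \to 2^+$ there exist pairs $u_n, v_n$ in the set $\{u \in B(0,M) : P_{E_i}u \notin B(0,\tfrac{1}{M})\}$ solving $-\Delta u + Vu = \lambda_i \abs{u}^{p_n-2}u$ with $P_{E_i}u_n = P_{E_i}v_n$ but $u_n \neq v_n$.

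The key idea is to look at the normalized difference $w_n \eqdef (u_n - v_n)/\norm{u_n - v_n}$, which has unit norm and satisfies $P_{E_i}w_n = 0$. Subtracting the two equations, $w_n$ solves a linear problem
\begin{equation*}
-\Delta w_n + V w_n = \lambda_i \, c_n(x)\, w_n,
\end{equation*}
where $c_n(x) = (\abs{u_n}^{p_n-2}u_n - \abs{v_n}^{p_n-2}v_n)/(u_n - v_n)$ is the appropriate difference quotient of the nonlinearity $t \mapsto \abs{t}^{p_n-2}t$. The central computation is to show that the coefficient $a_n(x) \eqdef \lambda_i c_n(x)$ approaches $\lambda_i$ in the $L^{N/2}(\Omega)$ norm as $p_n \to 2^+$: for $p_n$ near $2$ the exponent $p_n - 2$ is small, so $\abs{t}^{p_n-2} \to 1$ pointwise, and the difference quotient of $\abs{t}^{p_n-2}t$ tends to its value at $p=2$, namely $1$. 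The $L^{N/2}$ convergence then follows from the a priori $H^1_0$-bound $u_n, v_n \in B(0,M)$ together with the Sobolev embedding, which controls the higher integrability needed to pass from pointwise convergence to $L^{N/2}$ convergence (by dominated convergence or a Vitali-type argument).

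Once $\norm{a_n - \lambda_i}_{L^{N/2}} < \varepsilon$ for $n$ large, Lemma~\ref{Lem:uniq} applies to the solution $w_n$ of $-\Delta w_n + V w_n = a_n(x) w_n$ and forces $w_n = 0$ or $P_{E_i}w_n \neq 0$. Since by construction $P_{E_i}w_n = 0$, we must have $w_n = 0$, contradicting $\norm{w_n} = 1$. This contradiction yields the desired $\tilde{p}$ and hence $u_p = v_p$ for $p \in (2,\tilde{p})$.

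The main obstacle I expect is the $L^{N/2}$ convergence of the difference-quotient coefficient $a_n$, rather than merely pointwise convergence: the constraint $P_{E_i}u \notin B(0,\tfrac{1}{M})$ is what prevents the solutions from degenerating to $0$ and keeps the rescaling meaningful, but one must still ensure that the difference quotient does not concentrate where $u_n$ or $v_n$ is large, which is exactly where the uniform $H^1_0$-bound and Sobolev integrability do the work. Establishing this convergence uniformly enough to feed into Lemma~\ref{Lem:uniq} is the technical heart of the argument; the rest is the soft compactness-and-contradiction scaffolding.
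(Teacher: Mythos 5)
Your architecture is exactly the paper's (which itself only points to the proof of Proposition~3.2 in~\cite{bbgv}): take the difference $w=u_p-v_p$, observe it solves the linear equation $-\Delta w+Vw=a(x)w$ with $a$ the difference quotient of $t\mapsto\lambda_i\abs{t}^{p-2}t$, show $\norm{a-\lambda_i}_{L^{N/2}}<\varepsilon$ for $p$ close to $2$, and conclude from Lemma~\ref{Lem:uniq} that $w=0$ since $P_{E_i}w=0$. So the route is the same; the only point worth flagging is that your account of the key estimate is slightly misaimed. Writing $a=\lambda_i(p-1)\abs{\xi_p}^{p-2}$ with $\xi_p$ between $u_p$ and $v_p$, the dangerous region is not where the solutions are large (there Sobolev integrability of $\abs{\xi_p}^{p-2+\delta}$ does the work, as you say) but where $\xi_p$ is \emph{small}: for $p>2$ one has $\abs{t}^{p-2}\to 0$ as $t\to 0$, so near the common nodal set $a$ is bounded but sits at distance $\simeq\lambda_i$ from $\lambda_i$, and one must show the \emph{measure} of that region vanishes as $p\to 2$. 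Since $u_p,v_p$ vary with $p$, plain dominated convergence does not apply; the standard fix is the compactness step you only hint at — extract limits, use $P_{E_i}u_p\notin B(0,\tfrac1M)$ to see the limits are nonzero elements of $E_i$, and invoke the standing assumption that eigenfunctions have nodal sets of measure zero — after which a Vitali-type argument closes the estimate. With that step made explicit your proof matches the intended one.
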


These two results permit us to conclude as in Theorem~$3.6$ in~\cite{bbgv}. 

\begin{Thm}
\label{thmf}
Let $(G_\alpha)_{\alpha \in E}$ with $E=E_i$ be a group acting on
$H^{1}_{0}(\Omega)$ such that, for $g\in G_{\alpha}$ and $u\in H^{1}_{0}(\Omega)$,
\begin{center}
$g(E)=E$,\quad $g(E^{\perp})=E^{\perp}$,\quad $g\alpha=\alpha$\quad and\quad 
$\mathcal{E}_{p}(gu)=\mathcal{E}_{p}(u).$
\end{center}
For any $M>1$, $\exists \tilde{p}>2$ such that, for any family of solutions 
$(u_p)_{\tilde{p}>p>2}\subset  \{ u\in B(0,M): P_{E_i}u\notin
B(0, \frac{1}{M})\}$ of the boundary value problem with DBC  $-\Delta
u + Vu = \lambda_i \abs{u}^{p-2}u$,  $u_{p}$ belongs to
the invariant set of $G_{\alpha_{p}}$ where $\alpha_{p}$ is the
orthogonal projection $P_{E}u_{p}$.
\end{Thm}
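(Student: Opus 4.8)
The plan is to reduce the statement to the uniqueness supplied by Proposition~\ref{Prop-intro:uniqueness}. Fix $M>1$ and let $\tilde{p}>2$ be the threshold furnished by that proposition for this value of $M$ and the index $i$. Fix $p\in(2,\tilde{p})$ and a solution $u_p\in\{u\in B(0,M):P_{E_i}u\notin B(0,\frac{1}{M})\}$, write $\alpha_p:=P_E u_p$ (with $E=E_i$), and pick an arbitrary $g\in G_{\alpha_p}$. I would set $v_p:=gu_p$ and then check that $v_p$, together with $u_p$, satisfies all the hypotheses of Proposition~\ref{Prop-intro:uniqueness} and that $P_{E_i}v_p=P_{E_i}u_p$. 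The proposition then forces $v_p=u_p$, i.e.\ $gu_p=u_p$. Since $g\in G_{\alpha_p}$ is arbitrary, this says precisely that $u_p$ lies in the invariant set of $G_{\alpha_p}$, which is the claim.

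First I would verify that $v_p$ is again an admissible solution. As the elements of $G_{\alpha_p}$ act as linear isometries of $H^1_0(\Omega)$ (the framework of~\cite{bbgv}), we have $\norm{v_p}=\norm{u_p}\le M$, so $v_p\in B(0,M)$. The invariance $\mathcal{E}_p(gu)=\mathcal{E}_p(u)$, combined with the linearity and invertibility of $g$, gives $\mathcal{E}_p'(gu)[gh]=\mathcal{E}_p'(u)[h]$ for every test function $h$; hence $g$ carries critical points of $\mathcal{E}_p$ to critical points, so $v_p=gu_p$ solves $-\Delta v+Vv=\lambda_i\abs{v}^{p-2}v$ with DBC.

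Next I would compute the projection of $v_p$. Because $g(E)=E$ and $g(E^{\perp})=E^{\perp}$, the map $g$ commutes with the orthogonal projection $P_E=P_{E_i}$: decomposing $u_p=\alpha_p+(u_p-\alpha_p)$ with $\alpha_p\in E$ and $u_p-\alpha_p\in E^{\perp}$, we get $gu_p=g\alpha_p+g(u_p-\alpha_p)$ with $g\alpha_p\in E$ and $g(u_p-\alpha_p)\in E^{\perp}$, whence $P_{E_i}v_p=g\alpha_p$. Since $g\in G_{\alpha_p}$ satisfies $g\alpha_p=\alpha_p$, this yields $P_{E_i}v_p=\alpha_p=P_{E_i}u_p$. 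In particular $P_{E_i}v_p=P_{E_i}u_p\notin B(0,\frac{1}{M})$, so $v_p$ belongs to the admissible set and its projection coincides with that of $u_p$. Applying Proposition~\ref{Prop-intro:uniqueness} to the pair $u_p,v_p$ gives $v_p=u_p$, completing the argument.

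The only genuinely delicate point is ensuring that $v_p$ stays inside the fixed admissible set $\{u\in B(0,M):P_{E_i}u\notin B(0,\frac{1}{M})\}$ with the \emph{same} $M$: the containment $v_p\in B(0,M)$ rests on the action being isometric, and the equality $P_{E_i}v_p=P_{E_i}u_p$ rests crucially on $g\alpha_p=\alpha_p$ together with $g$ preserving the splitting $E\oplus E^{\perp}$. Everything else is bookkeeping. One should also record that, although $\alpha_p$ and the group $G_{\alpha_p}$ vary with $p$, the threshold $\tilde{p}$ depends only on $M$ and $i$, so the conclusion holds uniformly across the family $(u_p)_{\tilde{p}>p>2}$ and for every $g\in G_{\alpha_p}$ at each fixed $p$.
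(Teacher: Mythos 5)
Your proposal is correct and follows essentially the same route as the paper, which proves this theorem by appealing to the argument of Theorem~3.6 in~\cite{bbgv}: one checks that $gu_p$ is again a solution in the admissible set $\{u\in B(0,M): P_{E_i}u\notin B(0,\frac{1}{M})\}$ with $P_{E_i}(gu_p)=P_{E_i}u_p$, and then invokes the uniqueness of Proposition~\ref{Prop-intro:uniqueness} to conclude $gu_p=u_p$. Your identification of the key verifications --- that the (linear, isometric) action preserves the ball and the splitting $E\oplus E^{\perp}$, so that $gu_p$ remains admissible with unchanged projection --- is exactly the content of that argument.
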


Theorem~\ref{thmf} can be used for any bounded family of
solutions staying away 
 from  $0$. To apply Theorem~\ref{thmf} at a family $(u_p)_{p>2}$ of ground state
 (resp.\ least energy nodal) solutions for the problem~\eqref{eqp}, we
 study the asymptotic 
 behavior  when $p\to 2$. We 
 prove that the expected upper and lower bounds are fine if and only if
 $\lambda = \lambda_1$ (resp.\ $\lambda_2$).  In some sense,
 $\lambda_1$ (resp.\ $\lambda_2$) is the natural rescaling to work
 with ground state (resp.\ least energy nodal) solutions of
 problem~\eqref{eqp}. Let us 
 remark that this condition is not a restriction. Indeed, by
 homogeneity of $\lambda \abs{u}^{p-2}u$ in the equation~\eqref{eqp}, the symmetries of ground state (resp.\
 least energy nodal) solutions are independent of $\lambda$.  

\subsection{Asymptotic behavior}
Let us denote $(u_p)_{p>2}$ a family of ground state
 (resp.\ least energy nodal) solutions for the problem~\eqref{eqp}. 
We consider $\lambda = \lambda_1$ for g.s.\ (resp.\ $\lambda_n$ the first not 
principal eigenvalue for l.e.n.s.) and $E=E_1$ (resp.\ $E_n$). 

\begin{lemma}
Concerning the upper bound, $\limsup_{p\to2} \norm{u_p}^2 = 
\limsup_{p\to 2} \left( \frac{\mathcal{E}_p(u_p)}
{1/2 -1/p}\right)\leq \norm{u_*}^2$ where $u_*\in E$ mi\-ni\-mi\-zes the limit 
functional $\mathcal{E}_*:E\to \IR: u\mapsto \int_{\Omega}u^2- \log u^2.$
\end{lemma}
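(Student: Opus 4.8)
The plan is to use the (nodal) Nehari characterization to replace the energy by a multiple of the norm, and then to bound that norm from above by testing the variational problem against a single fixed element of $E$. For the identity, since $u_p$ solves the equation it belongs to $\mathcal{N}_p$ (resp.\ $\M_p$), so $\norm{u_p}^2=\lambda\int_\Omega\abs{u_p}^p$ (resp.\ the same holds for $u_p^+$ and $u_p^-$ separately, and these add up because $\norm{u_p}^2=\norm{u_p^+}^2+\norm{u_p^-}^2$ by disjointness of the supports of $u_p^\pm$). Inserting this into $\mathcal{E}_p(u_p)=\frac12\norm{u_p}^2-\frac\lambda p\int_\Omega\abs{u_p}^p$ yields $\mathcal{E}_p(u_p)=(\frac12-\frac1p)\norm{u_p}^2$, which is exactly the first equality; no passage to the limit is involved here.

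For the upper bound I would fix $w\in E\setminus\{0\}$ and project it onto the (nodal) Nehari set. In the ground state case I scale $w$ to $t_pw\in\mathcal{N}_p$, where $t_p^{p-2}=\norm{w}^2/(\lambda\int_\Omega\abs{w}^p)$; since $w\in E_1$ and $\lambda=\lambda_1$ one has $\norm{w}^2=\lambda_1\int_\Omega w^2$, so $t_p^{p-2}=\int_\Omega w^2/\int_\Omega\abs{w}^p$. Minimality of $u_p$ on $\mathcal{N}_p$ then gives $\mathcal{E}_p(u_p)\le\mathcal{E}_p(t_pw)=(\frac12-\frac1p)t_p^2\norm{w}^2$, hence $\norm{u_p}^2\le t_p^2\norm{w}^2$. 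For least energy nodal solutions I would carry this out independently for $w^+$ and $w^-$ (each satisfies $\norm{w^\pm}^2=\lambda_n\int_\Omega(w^\pm)^2$, obtained by testing the eigenvalue equation against $w^\pm$), producing a competitor in $\M_p$ and the analogous two-term bound.

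The crux is the limit $p\to2$ of $t_p^2=(\int_\Omega w^2/\int_\Omega\abs{w}^p)^{2/(p-2)}$, an indeterminate $1^\infty$ form. Writing $\abs{w}^p=w^2\exp((p-2)\log\abs{w})$ and expanding, I would prove $\int_\Omega\abs{w}^p=\int_\Omega w^2+\frac{p-2}2\int_\Omega w^2\log w^2+o(p-2)$, so that
\[
\log t_p^2=\frac{2}{p-2}\left(\log\int_\Omega w^2-\log\int_\Omega\abs{w}^p\right)\to-\frac{\int_\Omega w^2\log w^2}{\int_\Omega w^2},
\]
and therefore $t_p^2\to\exp(-\int_\Omega w^2\log w^2/\int_\Omega w^2)$. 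This is precisely the step that makes the logarithmic term of $\mathcal{E}_*$ appear, and I expect it to be the main obstacle. Making the expansion rigorous requires noting that the eigenfunctions of $-\Delta+V$ are bounded (elliptic regularity and a bootstrap under $V\in L^{N/2}(\Omega)$) and that $w^2\log w^2$ is integrable, and then controlling the remainder $\int_\Omega w^2(\abs{w}^{p-2}-1-(p-2)\log\abs{w})$ by dominated convergence.

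Finally, combining the two steps gives $\limsup_{p\to2}\norm{u_p}^2\le\norm{w}^2\exp(-\int_\Omega w^2\log w^2/\int_\Omega w^2)$ (resp.\ the sum of the two analogous terms). This bound holds for every admissible $w$, and its right-hand side is invariant under scaling $w$; taking the best (smallest) such bound over the directions of $E$ reduces it to the value $\norm{u_*}^2$ realised by the minimizer $u_*$ of the limit functional $\mathcal{E}_*$, which completes the proof.
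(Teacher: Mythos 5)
Your preliminary identity $\mathcal{E}_p(u_p)=\bigl(\tfrac12-\tfrac1p\bigr)\norm{u_p}^2$ is correct, and your treatment of the ground state case is also correct and is a genuinely different, more elementary route than the paper's: since $E_1$ is one--dimensional, the single competitor $t_p\,e_1\in\mathcal{N}_p$ with $t_p^{p-2}=\norm{e_1}^2/(\lambda\int_\Omega\abs{e_1}^p)$ already realises $\norm{u_*}^2$ in the limit, and no corrector function is needed. The paper instead uses, in both cases, the ansatz $v_p=u_*+(p-2)w$ with $w\perp E$ solving a linearised auxiliary problem, and projects $v_p$ onto the (nodal) Nehari set.

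The gap is in the least energy nodal case. Your competitor $s_pw^++t_pw^-$ with $w\in E_n$ yields, in the limit,
\[
\limsup_{p\to2}\norm{u_p}^2\;\le\;\norm{w^+}^2e^{-A^+}+\norm{w^-}^2e^{-A^-},
\qquad A^\pm=\frac{\int_\Omega (w^\pm)^2\log (w^\pm)^2}{\int_\Omega (w^\pm)^2}.
\]
Writing $a^\pm=\int_\Omega(w^\pm)^2$ and using $\norm{w^\pm}^2=\lambda_n a^\pm$, the right--hand side is $\lambda_n\bigl(a^+e^{-A^+}+a^-e^{-A^-}\bigr)$, whereas the value $\norm{u_*}^2$ you must reach is, by the scaling argument you yourself use, $\lambda_n(a^++a^-)e^{-\bar A}$ with $\bar A=(a^+A^++a^-A^-)/(a^++a^-)$ for the optimal direction in $E_n$ (this is the infimum over the constraint $\langle\intd\mathcal{E}_*(u),u\rangle=0$ of Theorem 2.8). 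By strict convexity of $x\mapsto e^{-x}$, $a^+e^{-A^+}+a^-e^{-A^-}>(a^++a^-)e^{-\bar A}$ whenever $A^+\neq A^-$, so every competitor in your family overshoots $\norm{u_*}^2$ by a fixed Jensen gap unless the optimal $w$ happens to satisfy $A^+(w)=A^-(w)$, which is a non--generic coincidence. Hence your final step ``taking the best such bound over the directions of $E$ reduces it to $\norm{u_*}^2$'' fails: the infimum of your bounds is in general strictly larger than $\norm{u_*}^2$. This is precisely the obstruction the paper's corrector is designed to remove: in $v_p=u_*+(p-2)w$ with $-\Delta w+Vw-\lambda_2 w=\lambda_2 u_*\log\abs{u_*}$ and $P_Ew=0$, the component of $v_p$ \emph{outside} the eigenspace shifts the two quantities $\norm{v_p^\pm}^2-\lambda_n\int_\Omega(v_p^\pm)^2$ at order $p-2$, which lets $v_p^+$ and $v_p^-$ be placed on $\mathcal{N}_p$ separately without paying the Jensen penalty, so that the projected energy is $\bigl(\tfrac12-\tfrac1p\bigr)\bigl(\norm{u_*}^2+o(1)\bigr)$. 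To repair your argument you would have to enlarge the competitor family beyond $\{sw^++tw^-: w\in E_n,\ s,t>0\}$ in essentially this way.
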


\begin{proof}
The proof is inspired by Lemma~$4.1$ in~\cite{bbgv}.    First, we define 
$v_p:= u_* + (p-2)w$ where $w\in H^1_0(\Omega)$ solves the problem 
$-\Delta w + V w -\lambda_2 w = \lambda_2 u_* \log \abs{u_*}$ with $P_{E}w=0$.
Then, we prove that the projection of $v_p$ on $\mathcal{N}_p$ (resp.\
$\mathcal{M}_p$) converges when $p\to 2$.

Concerning least energy nodal solutions, in~\cite{bbgv}, the result
has been stated for 
$n=2$. Here, let us remark that it works with $E_n$ which is
not specially $E_2$. We just need to ensure that $v_p$ is
sign-changing for $p$ close to $2$. 
\end{proof}

Nevertheless, we need to assume
$n=2$ to obtain 
the lower bound. It is explained in the next Lemma. 

\begin{lemma}
Concerning the lower bound, if $n=2$ then $\liminf_{p\to 2}\norm{u_p}>0$.
\end{lemma}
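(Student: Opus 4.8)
The plan is to argue by contradiction and to exploit that a least energy nodal solution is assembled from its two sign-definite parts, together with the fact that the rescaling $\lambda=\lambda_2$ is exactly the second eigenvalue of $-\Delta+V$. Suppose the claim fails, so that $\norm{u_{p_k}}\to 0$ along some sequence $p_k\to 2$; then $\norm{u_{p_k}^{+}}\to 0$ and $\norm{u_{p_k}^{-}}\to 0$ as well. The pieces $u_p^{+}$ and $u_p^{-}$ are nonzero elements of $H^1_0(\Omega)$ with disjoint supports (the nodal set being of measure $0$), so they are orthogonal both in $L^{2}(\Omega)$ and for $\norm{\cdot}$; they therefore span a two-dimensional subspace $W_p\subset H^1_0(\Omega)$ on which $\norm{w}^2/\norm{w}_{L^2}^2\le\max_{\pm}\bigl(\norm{u_p^{\pm}}^2/\norm{u_p^{\pm}}_{L^2}^2\bigr)$. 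By the Courant--Fischer min--max formula for the second eigenvalue of $-\Delta+V$ in $H^1_0(\Omega)$, this yields $\lambda_2\le\max_{\pm}\bigl(\norm{u_p^{\pm}}^2/\norm{u_p^{\pm}}_{L^2}^2\bigr)$. This step is exactly where the hypothesis $n=2$ is used: the eigenvalue produced by the two nodal pieces is the second one, and it must coincide with the $\lambda_2$ sitting in the equation.

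I would then insert the Nehari identities. Since $u_p^{\pm}\in\mathcal{N}_p$, we have $\norm{u_p^{\pm}}^2=\lambda_2\int_{\Omega}\abs{u_p^{\pm}}^p$, the factors $\lambda_2$ cancel, and the previous inequality becomes $1\le\max_{\pm}\bigl(\int_{\Omega}\abs{u_p^{\pm}}^p/\int_{\Omega}\abs{u_p^{\pm}}^2\bigr)$. Picking the sign, say $+$, that attains the maximum and using the pointwise bound $\abs{u_p^{+}}^p\le\norm{u_p^{+}}_{L^\infty}^{p-2}\,\abs{u_p^{+}}^2$, I get $\int_{\Omega}(u_p^{+})^2\le\norm{u_p^{+}}_{L^\infty}^{p-2}\int_{\Omega}(u_p^{+})^2$, whence $\norm{u_p^{+}}_{L^\infty}\ge 1$ because $p>2$ and $u_p^{+}\neq 0$.

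To close the contradiction it remains to confront this uniform sup-norm lower bound with $\norm{u_{p_k}}\to 0$. Here I would call on the a priori $L^{\infty}$ bound stemming from the subcriticality of the nonlinearity: a Brezis--Kato / Moser iteration applied to $-\Delta u_p+Vu_p=\lambda_2\abs{u_p}^{p-2}u_p$, uniform for $p$ near $2$, provides a continuous function $\Phi$ with $\Phi(0)=0$ and $\norm{u_p}_{L^\infty}\le\Phi(\norm{u_p})$. Then $\norm{u_{p_k}}\to 0$ forces $\norm{u_{p_k}^{+}}_{L^\infty}\le\norm{u_{p_k}}_{L^\infty}\to 0$, contradicting $\norm{u_p^{+}}_{L^\infty}\ge 1$; hence $\liminf_{p\to 2}\norm{u_p}>0$. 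The main obstacle is precisely this uniform $L^{\infty}$ estimate: one must check that the iteration constants stay bounded as $p\to 2$ (easy, since the growth exponent $p-1\to 1$) and that the bound genuinely vanishes with $\norm{u_p}$. It is also worth recording why $n=2$ is indispensable: for the ground state the identical computation with $\lambda_1$ in place of $\lambda_2$ works because $\lambda_1$ is the first eigenvalue, whereas for $n>2$ the min--max step would only give the factor $\lambda_2/\lambda_n<1$, so that one would merely deduce $\norm{u_p^{+}}_{L^\infty}\ge(\lambda_2/\lambda_n)^{1/(p-2)}\to 0$ and the contradiction would collapse.
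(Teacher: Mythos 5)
Your proof is correct, but it follows a genuinely different route from the paper's. The paper (following Lemma~4.4 of the cited reference) builds an auxiliary function $v_p=t_p(s_p^+u_p^++s_p^-u_p^-)$, with the weights $s_p^{\pm}$ chosen so that $v_p$ is orthogonal to $E_1$, rescales it onto the Nehari set, and then bounds $\norm{v_p}$ from below by combining the improved Poincar\'e inequality $\norm{w}^2\ge\lambda_2\norm{w}_{L^2}^2$ on $E_1^{\perp}$ with the interpolation $\norm{w}_{L^p}^p\le\norm{w}_{L^2}^{(1-\theta)p}\norm{w}_{L^{2^*}}^{\theta p}$; the hypothesis $n=2$ is what makes the Poincar\'e constant on $E_1^\perp$ coincide with the $\lambda$ in the equation. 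You instead feed the two-dimensional space spanned by $u_p^+$ and $u_p^-$ into the Courant--Fischer min--max, which is arguably the more transparent way to see why $n=2$ is the right hypothesis (a two-dimensional test space can only ever detect the second eigenvalue), and it spares you the construction of $s_p^{\pm}$ and the control of the rescaling factor $t_p$. The price is the closing step: the paper's argument never leaves the $H^1_0$--$L^p$ framework, whereas you invoke a uniform a priori $L^{\infty}$ bound. That bound is true here, but only because $V^-\in L^{\infty}(\Omega)$ and $V^+\ge 0$: a potential that is merely in $L^{N/2}$ is exactly the borderline case where Brezis--Kato gives $u\in L^q$ for all finite $q$ but \emph{not} $u\in L^{\infty}$, so you must first discard $V^+$ by its sign (e.g.\ via Kato's inequality, $-\Delta\abs{u}\le\norm{V^-}_{L^\infty}\abs{u}+\lambda_2\abs{u}^{p-1}$) before running the Moser iteration. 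Alternatively, your inequality $1\le\int_{\Omega}\abs{u_p^{+}}^p/\int_{\Omega}\abs{u_p^{+}}^2$ can be closed without any regularity theory by the same interpolation inequality the paper uses, which yields $\norm{u_p^+}\ge c>0$ directly and would make your argument fully self-contained.
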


\begin{proof}
The proof is inspired by Lemma $4.4$ in~\cite{bbgv}. Concerning l.e.n.s.\ (the
 argument is easier for g.s.), let $e_1$ be a first 
eigenfunction in $E_1$. By considering $s_{p}^-:= 
\frac{\int_{\Omega}u_{p}^+e_1}{\int_{\Omega}\abs{u_p}e_1} $ and
$s_{p}^+:= 1-s_{p}^-$,  we show the existence 
of $t_p>0$ such that $v_p= t_p(s_p^+ u_p^+ + s_p^- u_p^-)$ belongs to 
$\M_p \cap E_1^{\perp}$.

Then, we prove that $v_p$ stays away from zero using Poincaré's and
 Sobolev's embeddings, which concludes the proof.  For this part, we need 
to require that $\lambda_1$ is the unique principal eigenvalue, i.e.\
$n=2$.  Otherwise,  
we should prove that $v_p\in (E_1 \oplus\ldots E_{n-1})^{\perp}$, which
cannot be assumed. 
\end{proof}

The two previous results imply Theorem~\ref{result}.

\begin{Thm}\label{result}
Assume that $-\Delta + V$ is positive definite and possesses one and only 
one principal eigenvalue ($n=2$), $V^+\in L^{N/2}(\Omega)$ and $V^-\in 
L^{+\infty}(\Omega)$. If $(u_{p})_{p>2}$ is a family of ground state\
(resp.\ least energy nodal) solutions for equation~\eqref{eqp} then
 $\exists C>0$ such that $\norm{u_p}_{H^1_0} \le C
 \left(\frac{\lambda_i}{\lambda} 
\right)^\frac{1}{p-2},$ 
where $i=1$ (resp.\ $2$).
If $p_{n}\to 2$ and
$\left(\frac{\lambda_{i}}{\lambda}\right)^{\frac{1}{2-p_{n}}}u_{p_{n}}
\rightharpoonup u_{*}$ in $H^{1}_{0}(\Omega)$, then
$\left(\frac{\lambda_{i}}{\lambda}\right)^{\frac{1}{2-p_{n}}}u_{p_{n}}\to u_{*}$ in
$H^{1}_{0}(\Omega)$, $u_{*}$ satisfies $-\Delta u_{*}+ Vu_*=\lambda_{i}u_{*}$ and
$\mathcal{E}_{*}(u_{*})=\inf \{\mathcal{E}_{*}(u) : u\in
E_{i}\setminus\{0\}, \langle \intd \mathcal{E}_{*}(u),u\rangle=0\},$ where
$\mathcal{E}_{*}:E_{i}\to \IR : u\mapsto\frac{\lambda_{i}}{2}
\int_{\Omega}u^{2}-u^{2}\log u^{2}.$
\end{Thm}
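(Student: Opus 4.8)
The plan is to assemble the two preceding Lemmas after reducing to the normalized problem by homogeneity, and then to promote weak convergence of the rescaled family to strong convergence. First I would exploit the homogeneity of $\lambda\abs{u}^{p-2}u$ noted above: if $u_p$ solves \eqref{eqp} with parameter $\lambda$, then $\tilde u_p:=\left(\frac{\lambda}{\lambda_i}\right)^{\frac{1}{p-2}}u_p=\left(\frac{\lambda_i}{\lambda}\right)^{\frac{1}{2-p}}u_p$ solves the normalized equation $-\Delta\tilde u_p+V\tilde u_p=\lambda_i\abs{\tilde u_p}^{p-2}\tilde u_p$ and is again a ground state (resp.\ least energy nodal) solution; so it suffices to prove everything for $\tilde u_p$ with $\lambda=\lambda_i$ and then multiply back by $\left(\frac{\lambda_i}{\lambda}\right)^{\frac{1}{p-2}}$. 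For the a priori bound, since $\tilde u_p\in\NN_p$ (resp.\ $\M_p$) one has $\mathcal{E}_p(\tilde u_p)=(1/2-1/p)\norm{\tilde u_p}^2$, so the upper bound Lemma gives $\limsup_{p\to2}\norm{\tilde u_p}^2=\limsup_{p\to2}\frac{\mathcal{E}_p(\tilde u_p)}{1/2-1/p}\le\norm{u_*}^2<\infty$. By Proposition~\ref{equiv} this is a uniform bound on $\norm{\tilde u_p}_{H^1_0}$, i.e.\ the claimed estimate $\norm{u_p}_{H^1_0}\le C\left(\frac{\lambda_i}{\lambda}\right)^{\frac{1}{p-2}}$, and it lets me extract, along $p_n\to2$, a weak limit $\tilde u_{p_n}\rightharpoonup u_*$ in $H^1_0(\Omega)$.

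Next I would pass to the limit in the equation. Testing against $\phi\in H^1_0(\Omega)$, the left-hand side converges by weak convergence, while on the right I use $\abs{s}^{p_n-2}\to1$ together with the compact Sobolev embedding (and the uniform bound to control equi-integrability) to get $\lambda_i\int_\Omega\abs{\tilde u_{p_n}}^{p_n-2}\tilde u_{p_n}\phi\to\lambda_i\int_\Omega u_*\phi$; hence $-\Delta u_*+Vu_*=\lambda_i u_*$ and $u_*\in E_i\cup\{0\}$. Strong convergence then follows from convergence of norms: testing the equation by $\tilde u_{p_n}$ gives $\norm{\tilde u_{p_n}}^2=\lambda_i\int_\Omega\abs{\tilde u_{p_n}}^{p_n}\to\lambda_i\int_\Omega u_*^2=\norm{u_*}^2$, which combined with weak convergence in the Hilbert space forces $\tilde u_{p_n}\to u_*$ in $H^1_0(\Omega)$. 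The lower bound Lemma (this is the step where $n=2$ is needed) guarantees $\liminf\norm{\tilde u_{p_n}}>0$, whence $u_*\neq0$ and $u_*\in E_i\setminus\{0\}$.

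For the variational characterization I would pinch $\frac{\mathcal{E}_{p_n}(\tilde u_{p_n})}{1/2-1/p_n}=\norm{\tilde u_{p_n}}^2$ between two estimates. Passing to the limit in the Nehari identity $\norm{\tilde u_{p_n}}^2=\lambda_i\int_\Omega\abs{\tilde u_{p_n}}^{p_n}$ to first order in $p-2$ (using the expansion $\frac{\abs{s}^{p}-s^2}{p-2}\to s^2\log\abs{s}$) shows that $u_*$ satisfies the limit constraint $\langle\intd\mathcal{E}_*(u_*),u_*\rangle=0$; on this constraint a direct computation reduces $\mathcal{E}_*$ to a fixed multiple of $\norm{\cdot}^2$ on $E_i$. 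The upper bound Lemma gives $\limsup\norm{\tilde u_{p_n}}^2\le$ the infimum of $\mathcal{E}_*$ over the constrained set, the strong convergence identifies $\norm{u_*}^2=\lim\norm{\tilde u_{p_n}}^2$, and $\mathcal{E}_*(u_*)$ is of course $\ge$ that infimum; together these force $\mathcal{E}_*(u_*)=\inf\{\mathcal{E}_*(u):u\in E_i\setminus\{0\},\ \langle\intd\mathcal{E}_*(u),u\rangle=0\}$, so $u_*$ is a minimizer.

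The main obstacle is precisely this last identification of the limiting logarithmic functional: justifying the interchange of limit and integral in $\frac{\abs{\tilde u_{p_n}}^{p_n}-\tilde u_{p_n}^2}{p_n-2}\to u_*^2\log\abs{u_*}$ is delicate, since $\abs{s}^{p-2}$ converges to $1$ neither uniformly near $s=0$ nor for large $s$. This is where the a priori bounds and the equi-integrability provided by Proposition~\ref{equiv} are essential, exactly as in the corresponding argument of~\cite{bbgv}.
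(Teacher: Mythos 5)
Your proposal is correct and follows essentially the same route as the paper, which itself only records that the upper- and lower-bound lemmas imply the theorem and defers the details (homogeneous rescaling to $\lambda=\lambda_i$, the Nehari identity $\mathcal{E}_p(u_p)=(1/2-1/p)\norm{u_p}^2$, passage to the limit in the equation, strong convergence from convergence of norms, and the pinching argument identifying $u_*$ as a minimizer of $\mathcal{E}_*$) to the corresponding asymptotic analysis in~\cite{bbgv}. Your explicit treatment of the equi-integrability issue in the logarithmic expansion is exactly the point where that reference does the work.
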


\begin{rem}
\begin{enumerate}[(i)]
\item By a traditional bootstrap,
  $\left(\frac{\lambda_{i}}{\lambda}\right)^{\frac{1}{2-p_{n}}}u_{p_{n}} 
\rightharpoonup u_{*}$  implies
$\left(\frac{\lambda_{i}}{\lambda}\right)^{\frac{1}{2-p_{n}}}u_{p_{n}} \to u_*$ for 
$\mathcal{C}$-norm (see~\cite{gt}).
\item If $\lambda <\lambda_1$ (resp.\ $\lambda_2$), a family of 
g.s.\  (resp.\ l.e.n.s.)  blows up in $H^1_0(\Omega)$. 
If  $\lambda >\lambda_1$ (resp.\ $\lambda_2$), it goes to $0$. So, a family of 
g.s.\ (resp.\ l.e.n.s.) is bounded and stays away from $0$ 
if and only if $\lambda =\lambda_1$ (resp.\ $\lambda_2$).
\item By homogeneity of $\lambda\abs{u}^{p-2}u$,  the study of 
symmetries for only one value of $\lambda$ is enough to conclude
symmetries for any $\lambda >0$.  
\item By combining Theorem~\ref{thmf} and Theorem~\ref{result}, we
  obtain that gound state solutions for $p$ close to
  $2$ respect the symmetries of their projection on $E_1$.  As first
  eigenfunctions are unique up to a constant, they   keep symmetries
  of $V$ for $p$ close to $2$.  
\item By combining Theorem~\ref{thmf} and Theorem~\ref{result}, we
  obtain that least energy nodal solutions for $p$ close to
  $2$ respect the symmetries of their projection on $E_2$. 
\end{enumerate}
\end{rem}

\subsection{Symmetry breaking for least energy nodal
  solutions}\label{symbr} 
For $p\simeq 2$, previous results showed that the structure of least
energy nodal solutions are 
related to the symmetries of $u_*$ verifying $\mathcal{E}_{*}(u_{*})=
\inf \{\mathcal{E}_{*}(u) : u\in
E_{i}\setminus\{0\}, \langle \intd \mathcal{E}_{*}(u),u\rangle=0\}.$  

In~\cite{bbgv} (see Section~$6$), on the square and for $V\equiv 0$,
it is proved  that if $u_*$ 
does not respect the symmetries of the rectangle, i.e.\ $u_*$ is not
odd or even  
with resepct to the medians (which is numerically observed), 
then there exists a symmetry breaking on rectangles sufficiently close
to the square and $p$ sufficiently large.  In our case, this property can be
stated as follows. 
\begin{Thm}
Let us work on a square. If $V$ is odd or even with respect to a
median but $u_*$ does not 
respect this symmetry,  then there exist some rectangles and $p$ such that least
energy nodal solutions $u_p$ does not respect the symmetries of $V$.
\end{Thm}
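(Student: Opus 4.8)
The plan is to convert the symmetry statement into a strict comparison of nodal energy levels, and then transfer a breaking that already holds on the square to nearby rectangles. Write $\sigma$ for the median reflection leaving $V$ invariant, and let $\Sigma$ denote the $\sigma$-symmetric class, i.e.\ the functions that are odd or even for $\sigma$ (the union of the two $\sigma$-eigenspaces). For a rectangle $R$ whose sides are parallel to the axes of $\sigma$, so that $V$ and $R$ are both $\sigma$-invariant and $\mathcal{E}_p$ is $\sigma$-invariant, set
\[
 c_p(R):=\min_{\M_p}\mathcal{E}_p, \qquad c_p^\sigma(R):=\min_{\M_p\cap\Sigma}\mathcal{E}_p .
\]
A least energy nodal solution respects the symmetry of $V$ precisely when it lies in $\Sigma$; hence if the strict inequality $c_p(R)<c_p^\sigma(R)$ holds, no minimizer of $\mathcal{E}_p$ over $\M_p$ can be $\sigma$-symmetric, and every least energy nodal solution on $R$ breaks the symmetry. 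Proving the theorem therefore reduces to producing a rectangle $R$ and an exponent $p$ with $c_p(R)<c_p^\sigma(R)$.

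First I would record that the square $S$ itself breaks $\sigma$. By hypothesis the minimizer $u_*$ of the limit functional $\mathcal{E}_*$ on $E_2$ is not $\sigma$-invariant, so
\[
 \mathcal{E}_*(u_*)<\inf\{\mathcal{E}_*(u): u\in E_2\cap\Sigma,\ \langle \intd\mathcal{E}_*(u),u\rangle=0\}.
\]
Inserting $u_*$ (resp.\ the best $\sigma$-symmetric competitor) as a test function into the construction behind Theorem~\ref{result}, namely projecting onto $\M_p$ and rescaling by $(\lambda_2/\lambda)^{1/(p-2)}$, turns this into the strict inequality $c_p(S)<c_p^\sigma(S)$ for $p$ close to $2$: the square already breaks $\sigma$ near $p=2$.

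The transfer to rectangles is the heart of the matter, and it is here that the degeneracy of $E_2$ forces us away from $p\simeq 2$. On a rectangle $R\neq S$ the space $E_2(R)$ is one-dimensional and $\sigma$-symmetric, so Theorems~\ref{thmf} and~\ref{result} give $c_p(R)=c_p^\sigma(R)$ for $p$ in a window $(2,\tilde p(R))$; the dimension of $E_2$ jumps from $1$ to $2$ as $R\to S$, and $\tilde p(R)\downarrow 2$ with it, so the naive limit $R\to S$ at a fixed small $p$ cannot carry the gap across. For large $p$, by contrast, the rescaled nodal levels are governed by the geometry of the two nodal domains, through their inner radii and concentration profiles, rather than by $E_2$, and this dependence is genuinely continuous as $R\to S$. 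The optimal nodal partition of $S$ is diagonal and strictly outperforms every $\sigma$-symmetric (median) partition in the relevant concentration functional, so $c_p(S)<c_p^\sigma(S)$ persists into the large-$p$ regime; I would fix such a $p$, establish this strict inequality on $S$, and then invoke continuity of both levels under small perturbations of the domain to conclude $c_p(R)<c_p^\sigma(R)$ for every rectangle $R$ sufficiently close to $S$.

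The main obstacle is precisely the behaviour of the comparison across the degenerate point $R=S$: one must prove that the symmetric level $c_p^\sigma(R)$ stays uniformly separated from $c_p(R)$ as $R\to S$, i.e.\ a lower bound keeping $\sigma$-symmetric nodal competitors strictly above the genuinely non-symmetric minimizer. Quantifying this gap, through the limit functional $\mathcal{E}_*$ on $E_2$ near $p=2$ on the one hand and through the large-$p$ concentration on the two nodal domains on the other, and checking that the competitors built by projection onto $\M_p$ remain sign-changing and admissible on $R$, is the delicate part; the remaining estimates are the bookkeeping already developed in~\cite{bbgv}.
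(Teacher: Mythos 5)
Your reduction to the strict level inequality $c_p(R)<c_p^\sigma(R)$ and your use of the non-symmetry of $u_*$ on the square are the right starting point, but the argument has two gaps, one of which is fatal. The smaller one: inserting $u_*$ and the best $\sigma$-symmetric competitor as test functions only produces \emph{upper} bounds for $c_p(S)$ and $c_p^\sigma(S)$; to conclude $c_p(S)<c_p^\sigma(S)$ you also need the matching \emph{lower} bound $\liminf_{p\to2}\, c_p^\sigma(S)/(1/2-1/p)\ \ge\ \inf\{\mathcal{E}_*(u): u\in E_2\cap\Sigma,\ \langle \intd\mathcal{E}_*(u),u\rangle=0\}$ (suitably normalized). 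This is obtainable --- a minimizer of $\mathcal{E}_p$ over $\M_p\cap\Sigma^{\pm}$ is, by symmetric criticality, a genuine sign-changing solution, so the asymptotic analysis of Theorem~\ref{result} applies to it and forces its rescaled limit into $E_2\cap\Sigma^{\pm}$ --- but it is not the test-function insertion you describe.

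The fatal gap is the transfer to rectangles. You assert that the inequality cannot be carried across at fixed $p$ near $2$ and must instead be established in a ``large $p$'' regime where the levels are governed by concentration on the nodal domains and where ``the optimal nodal partition of $S$ is diagonal and strictly outperforms every median partition''. That last claim is precisely the open conjecture the paper itself records (``it is conjectured that l.e.n.s.\ is odd with respect to a diagonal''); neither it nor any continuation of the inequality from $p\simeq 2$ to large $p$ is available, so this step cannot be completed. Moreover the obstruction you invoke is illusory: since, as you note, $\tilde p(R)\downarrow 2$ as $R\to S$, you may \emph{fix} a $p_0>2$ close to $2$ with $c_{p_0}(S)<c_{p_0}^\sigma(S)$ and use the continuity of both levels under perturbation of the domain at this fixed subcritical exponent to get $c_{p_0}(R)<c_{p_0}^\sigma(R)$ for every rectangle $R$ close enough to $S$; for such $R$ one automatically has $p_0>\tilde p(R)$, so there is no conflict with Theorem~\ref{thmf}. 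This --- equivalently phrased as a compactness argument in the double limit $(R,p)\to(S,2)$: $\sigma$-symmetric least energy nodal solutions on $R_n$ with $p_n\to2$ would converge, after rescaling, to a $\sigma$-symmetric minimizer of $\mathcal{E}_*$ on $E_2(S)$, contradicting the hypothesis --- is the argument of Section~6 of \cite{bbgv} to which the paper appeals in lieu of a proof; it never leaves a neighbourhood of $p=2$.
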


Moreover, as $u_p$ converges for $\C$-norm, we are able to directly
construct $V$ such that the least energy nodal solutions break down
the symmetry of $V$. It will happen once $u_*$ is not symmetric.
In the next section numerical experiments illustrate this interesting 
case.

\section{Numerical illustrations: non-zero potentials $V$}    
\label{exp}
In this section, we compute the (resp.\ modified) mountain pass algorithm 
to approach one-signed (resp.\ sign-changing) 
solutions (see~\cite{mckenna,zhou1,zhou2,cdn}).  While it 
is not sure that approximate solutions have least energy, all the other 
solutions that we have found numerically have a larger energy. So,  we will 
assume that the approximations are ground state  
 (resp.\ least energy nodal) solutions.  We also give some level
 curves: $1$ and  
$2$ for g.s., $\pm 1$ and $\pm 2$ for l.e.n.s. 

Numerically, we study $p=4$. Let us remark that we always obtain the
same kind of symmetry for smaller values of $p$. We work with
$p=4$ to illustrate that the result of Theorem~\ref{intro} seems to
hold  at least for a non-negligeable interval.

\subsection{Negative constant potential on a square}

As first example, we consider a constant $V$ such that $\lambda_1>0$,
i.e.\ $V> -\tilde{\lambda}_1$ where $\tilde{\lambda}_1$ is the first
eigenvalue of $-\Delta$. So, the required assumptions on $V$ are  
clearly satisfied. Theorem~\ref{intro} holds. In particular,
concerning symmetries, we obtain 
\begin{enumerate}
\item for $p$ close to $2$, on convex domains, ground state solutions
  are even with  respect to each hyperplane leaving $\Omega$ invariant; 
\item for $p$ close to $2$, least energy nodal solutions on a rectangle are
even and odd with respect to a median;
\item for $p$ close to $2$, least energy nodal solutions  on radial domains 
are even with respect to $N-1$ orthogonal directions and odd with respect to 
the orthogonal one; 
\item for $p$ close to $2$, least energy nodal solutions  on a square are odd 
with respect to the barycenter. 
\end{enumerate}

Numerically, we consider $-\Delta u
- \frac{\pi^2}{4}u = u^3$ 
defined on the square $\Omega=(-1,1)^2$ in $\IR^2$.  First and second
eigenvalues 
 of $-\Delta$ are given by $\frac{\pi^2}{2}$ and
 $\frac{5\pi^2}{4}$. On the following 
graph, one-signed (resp.\ nodal)
numerical solutions have the expected symmetries. Ground state
solutions respect the symmetries of
the square and least energy nodal solutions are odd with respect to
the center $0$. Moreover,  
the nodal line of the 
sign-changing solutions seems to be a diagonal, as for $V\equiv 0$ 
(see~\cite{bbgv}). 

\begin{minipage}[h]{0.6\linewidth}
     \includegraphics[width=2.8cm, angle=270]{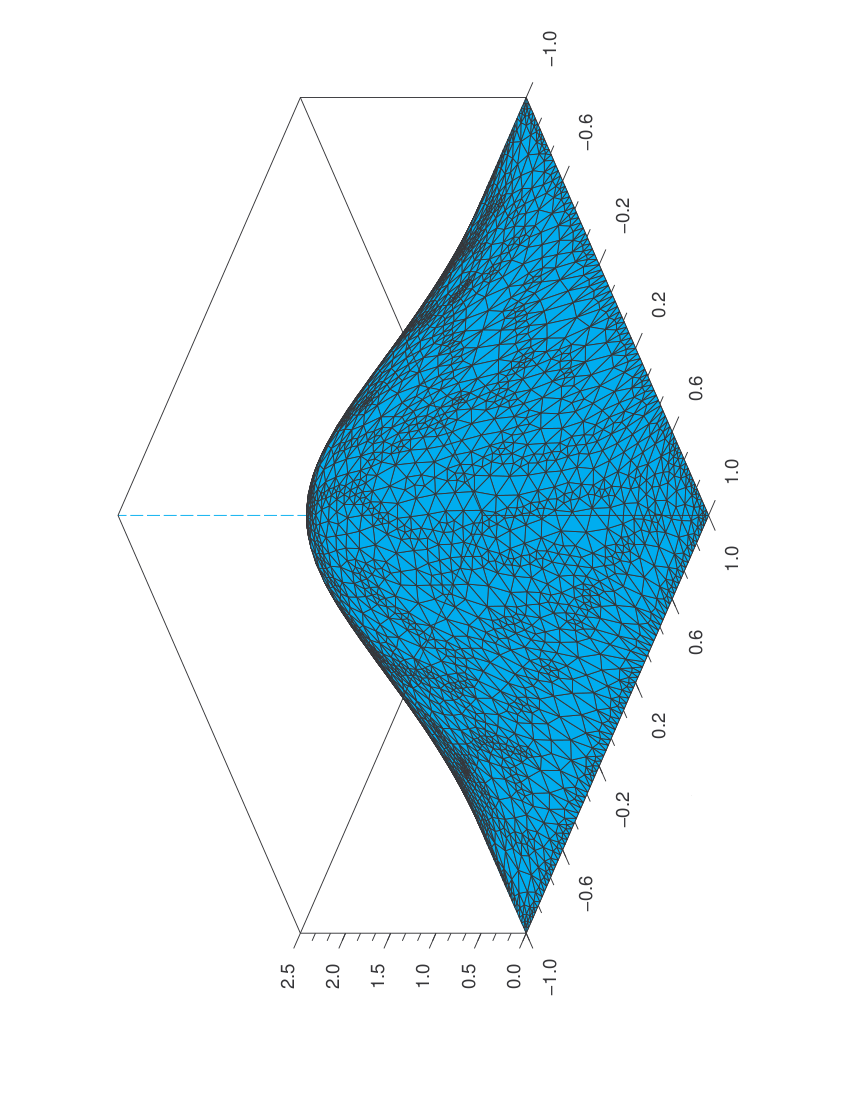}
\hfill
\includegraphics[width=2.8cm, angle = 270]{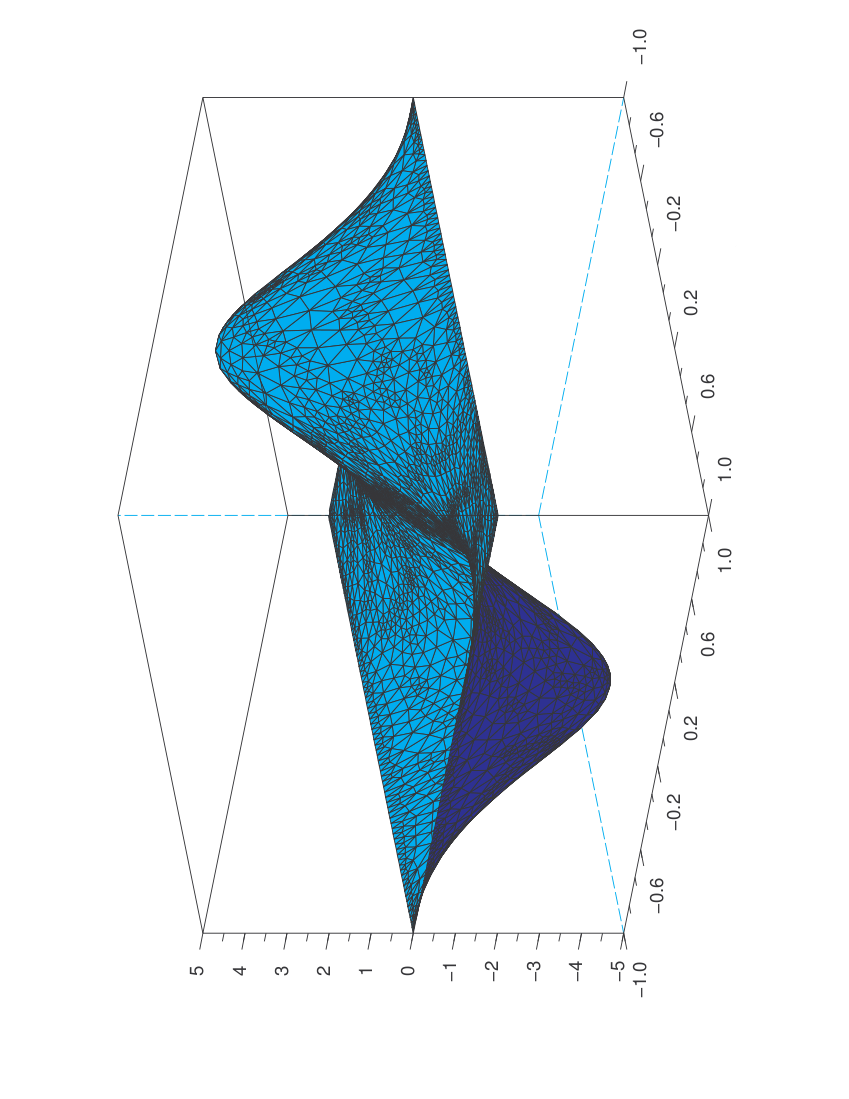}
\hfill\null
\begin{center}

\hfill\null
\end{center}
    \end{minipage}
\hfill
\begin{minipage}[h]{0.4\linewidth}
\begin{itemize}
\item For g.s.: $\max (u)=2.18 $, $\mathcal{E}_{4} (u)= 2.54$
\item For l.e.n.s.: $\min(u)= -4.61$, $\max (u)= 4.61$,  
$\mathcal{E}_4(u)=33.21$
\item Starting function for g.s.:\\ $(x-1)(y-1)(x+1)(y+1)$
\item Starting function for l.e.n.s.:\\ $\sin(\pi(x+1))\sin(2\pi (y+1))$
\end{itemize}
\end{minipage}
\hfill\null

\subsection{Piecewise constant potential on a rectangle}

As second example, $V$ is piecewise constant on $(0,2)\times(0,1)$.
In~\cite{gossez}, it is  
proved that there exists just one principal eigenvalue. So, our assumptions are 
satisfied and Theorem~\ref{intro} is available. For $\lambda=1$, $p=4$ and 
$V(x,y)=V_-:=0$ when $x<1$ (resp.\ $V_+:= 10$ otherwise), following
graphs indicate  
that  approximations are just even with respect to a direction. Ground
state solutions  are more or  less ``located'' in $x<1$  
(the side minimizing energy) and respect symmetries of $V$. Least
energy nodal solutions seem 
to be formed by g.s.\ on each nodal domains and are even with respect
to  a direction.  \\
\begin{minipage}[h]{0.6\linewidth}
     \includegraphics[width=2.8cm, angle=270]{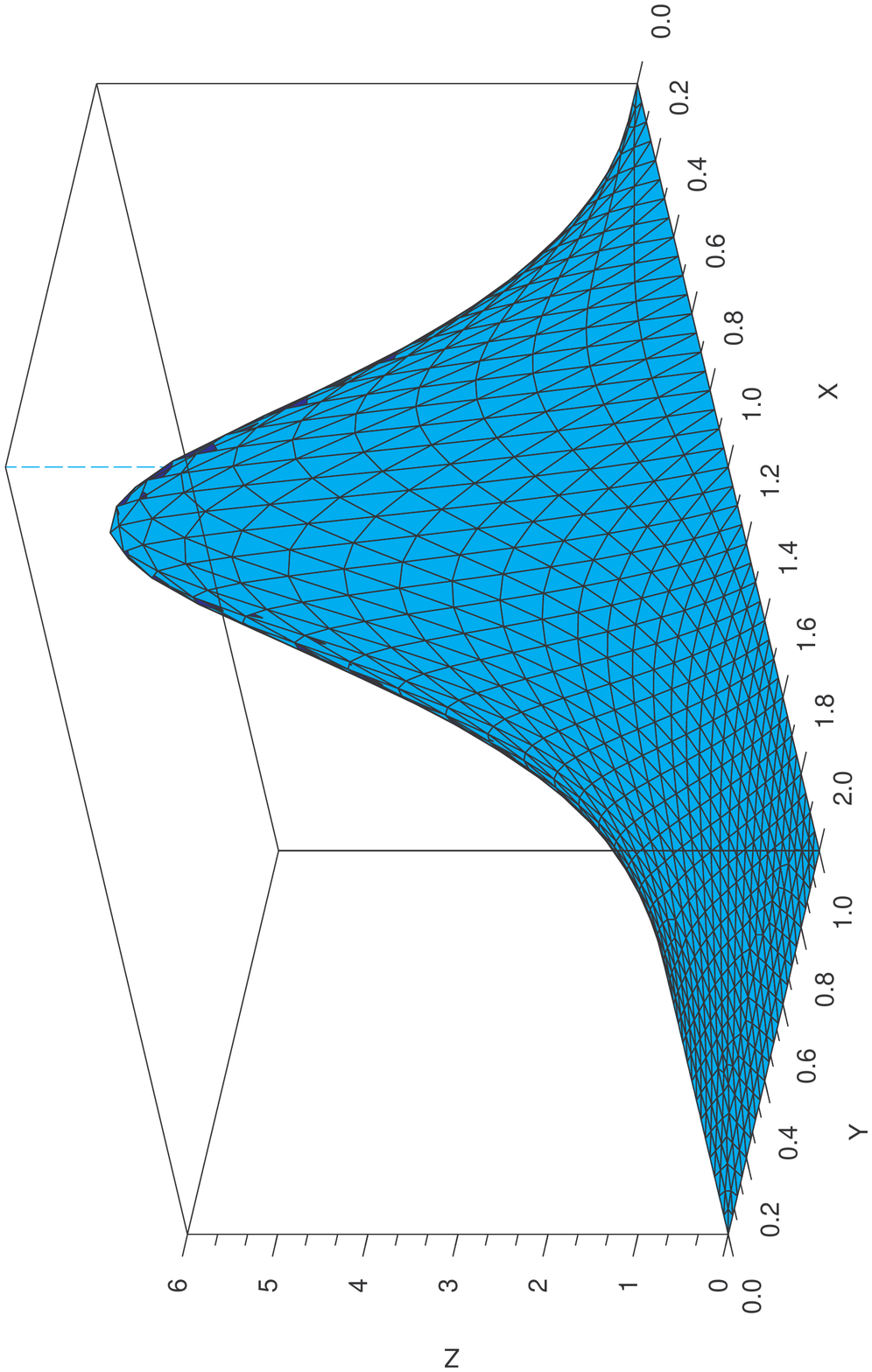}
\hfill
\includegraphics[width=2.8cm, angle = 270]{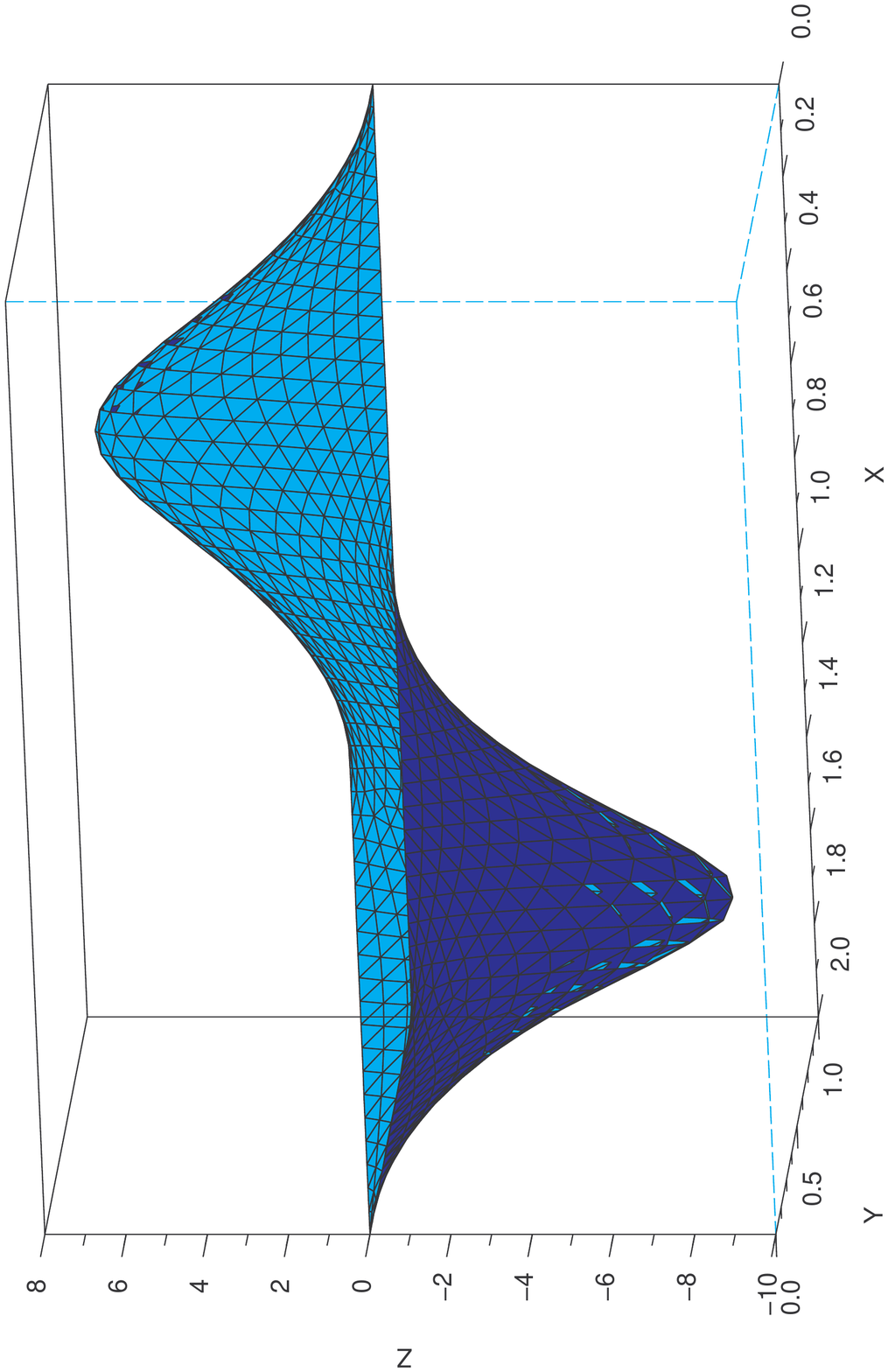}
\hfill\null
\begin{center}

\hfill\null
\end{center}
\hfill\null
\end{minipage}
\hfill
\begin{minipage}[h]{0.4\linewidth}
\begin{itemize}
\item For g.s.: $\max (u)=5.98 $, $\mathcal{E}_4 (u)= 30.98$
\item For l.e.n.s.: $\min(u)= -8.67$, $\max (u)= 6.53$,  
$\mathcal{E}_4(u)=76.23$
\item Starting function for g.s.: $(x-2)(y-1)xy$
\item Starting function for l.e.n.s.: \\$\sin(\pi(x+1))\sin(2\pi (y+1))$
\end{itemize}
\end{minipage}
\hfill\null

If $V_-=0$ and $V_+=35$, we get the same symmetry  for g.s.\ but l.e.n.s.\
 are not symmetric. The mass is more or less ``located'' in the square
 defined by $x<1$. So, we obtain a
 direct symmetry breaking. To minimize the energy, the difference in
 the potential is so large that it is better to locate the mass in one
 side of the rectangle. On a
 square and for $V=0$, it is conjectured that l.e.n.s.\ is odd with respect to a
 diagonal. It explains the structure of the approximation. 

\begin{minipage}[h]{0.6\linewidth}
     \includegraphics[width=2.8cm, angle=270]{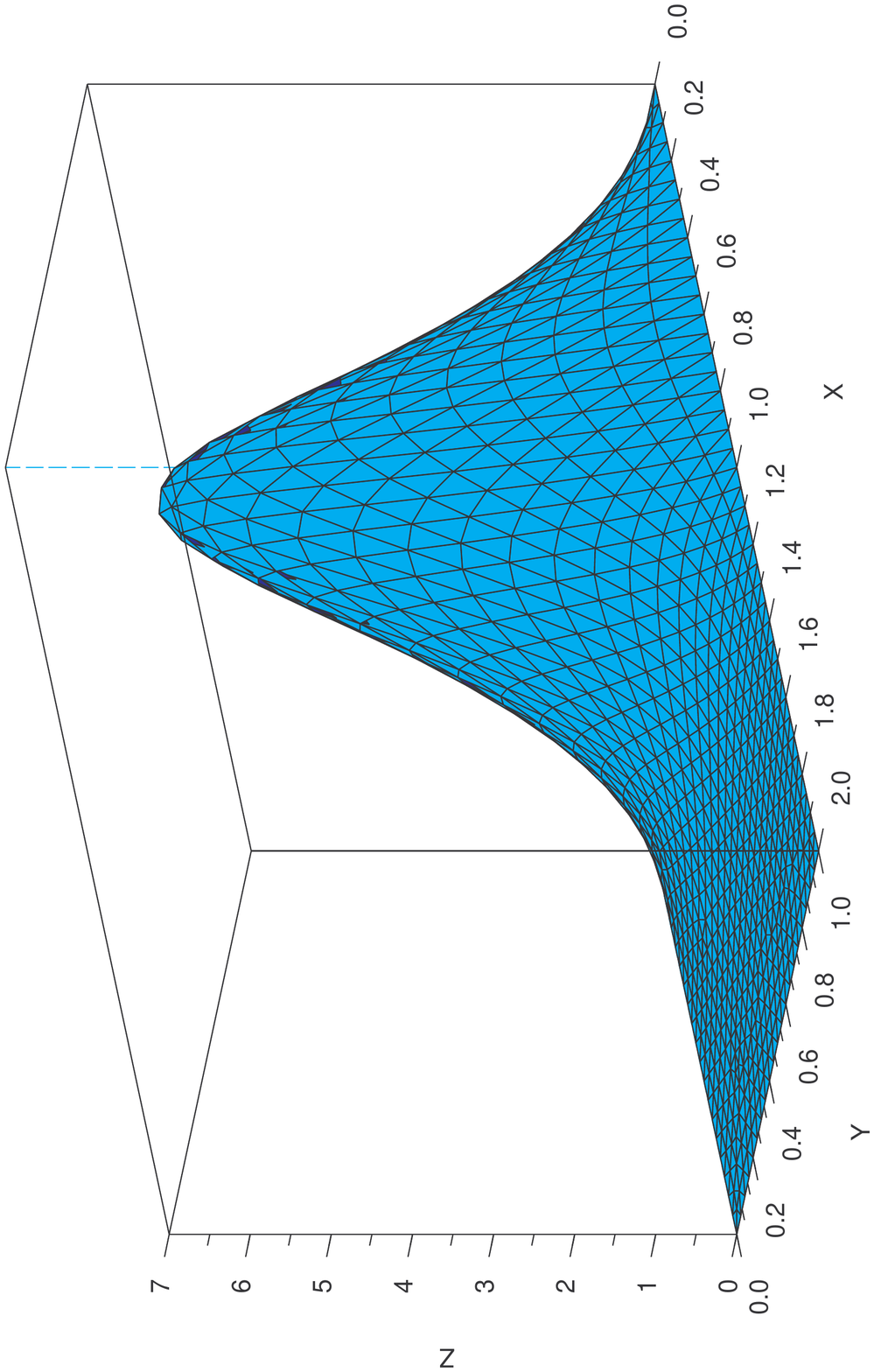}
\hfill
\includegraphics[width=2.8cm, angle = 270]{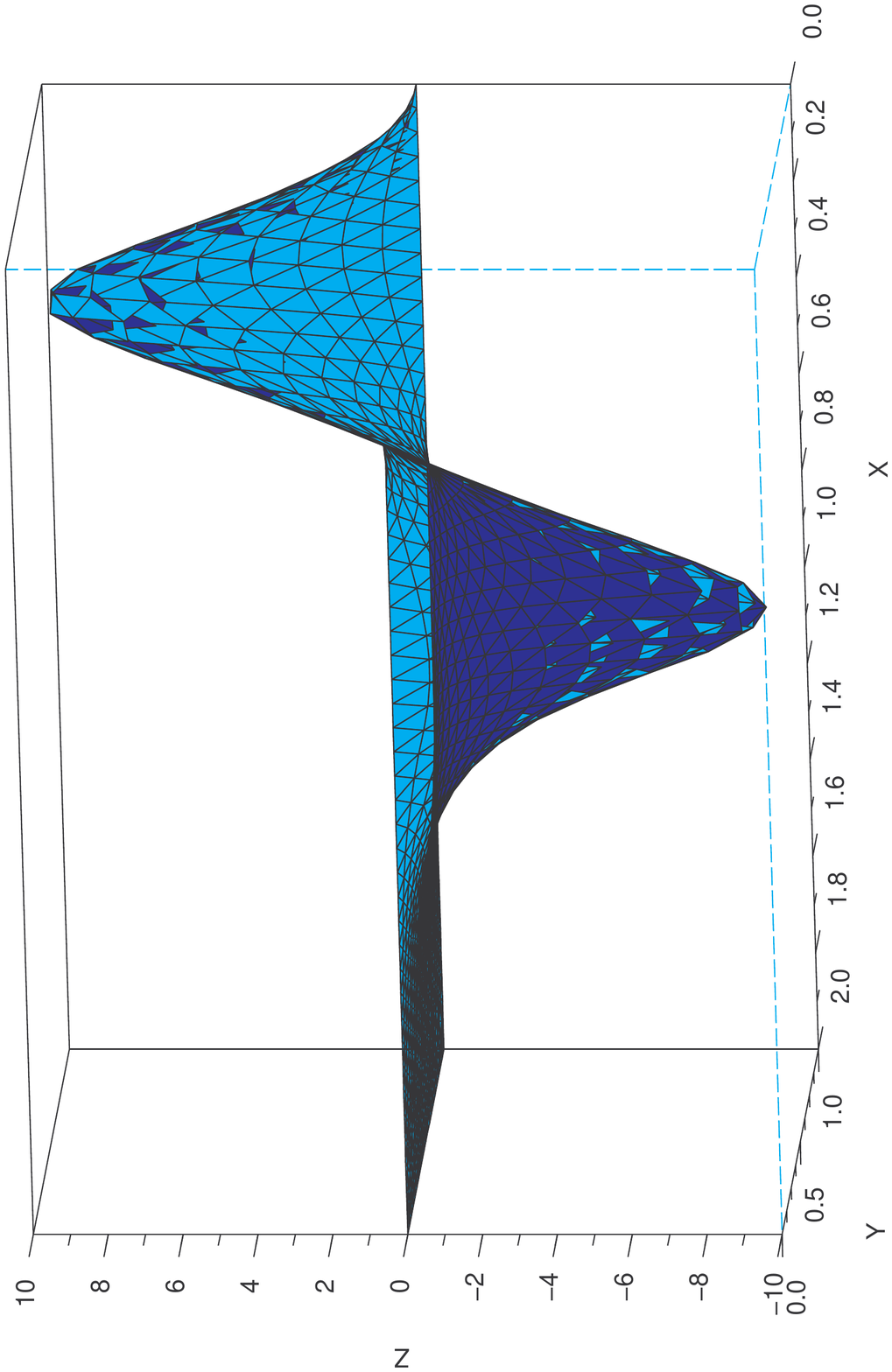}
\hfill\null
\begin{center}

\hfill\null
\end{center}
\hfill\null
\end{minipage}
\hfill
\begin{minipage}[h]{0.4\linewidth}
\begin{itemize}
\item For g.s.: $\max (u)=6.19 $, $\mathcal{E}_4 (u)= 33.14$
\item For l.e.n.s.: $\min(u)= -9.8$, $\max (u)= 9.7$,  
$\mathcal{E}_4(u)=181.09$
\item Starting function for g.s.: $(x-2)(y-1)xy$
\item Starting function for l.e.n.s.: \\$\sin(\pi(x+1))\sin(2\pi (y+1))$
\end{itemize}
\end{minipage}
\hfill\null

\subsection{A singular potential on a ball}

As last example, we study singular potentials. First, $\lambda =1$, $p=4$ and 
$V(x,y)= \frac{1}{\sqrt{x^2+y^2}}$ on the ball $B(0,1)$.  Approximations show
 as expected that the ground state solutions are radial and the least energy
 nodal solutions are odd and even with respect to a diagonal. We
 obtain the same symmetry as for the potential $V=0$ (see~\cite{bbgv}). \\
\begin{minipage}[h]{0.6\linewidth}
     \includegraphics[width=2.8cm, angle=270]{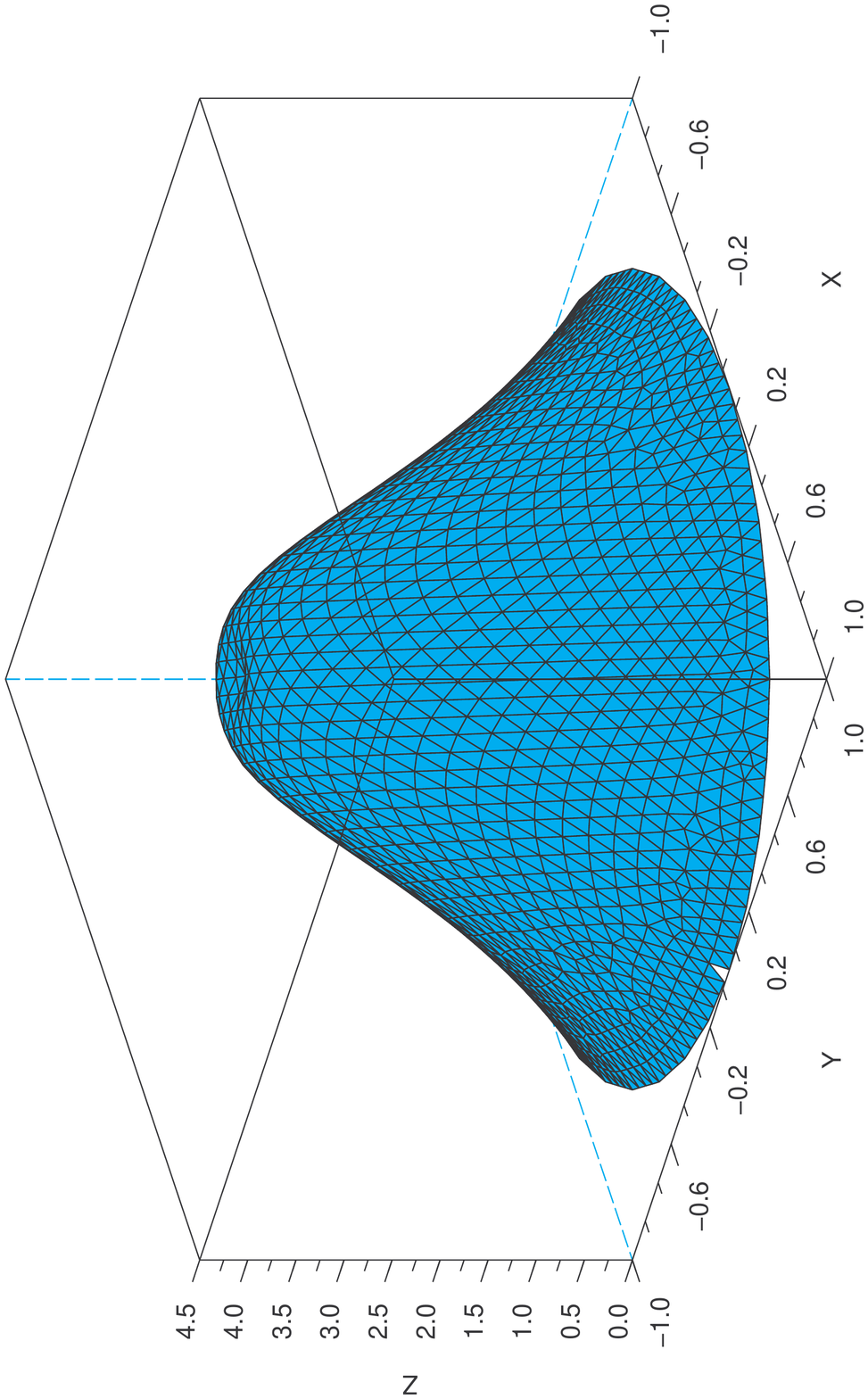}
\hfill
\includegraphics[width=2.8cm, angle = 270]{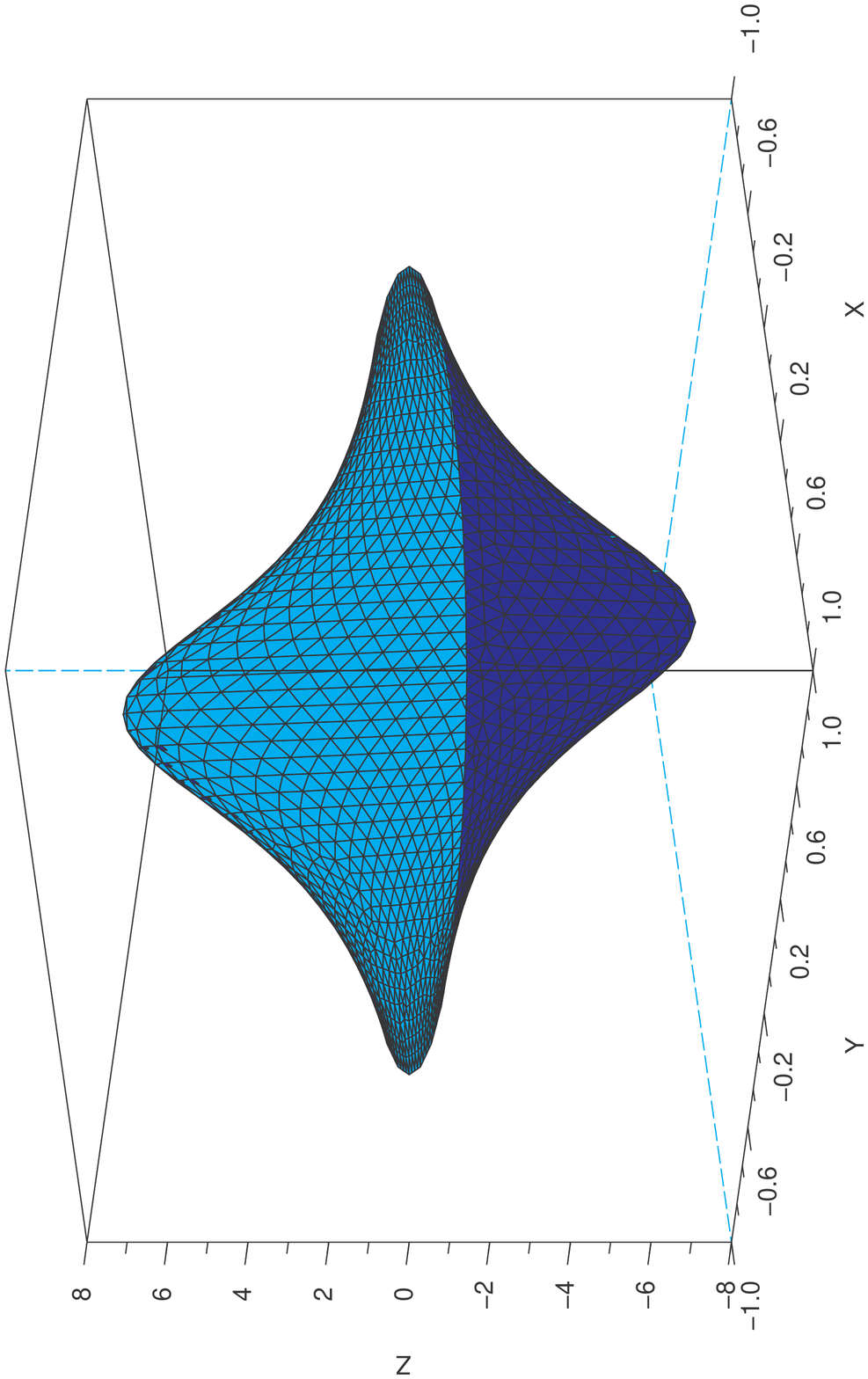}
\hfill\null
\begin{center}

\hfill\null
\end{center}
\hfill\null
\end{minipage}
\hfill
\begin{minipage}[h]{0.5\linewidth}
\begin{itemize}
\item For g.s.: $\max (u)=4.15 $, $\mathcal{E}_4 (u)= 29.9$
\item For l.e.n.s.: $\min(u)= -6.36$, $\max (u)= 6.36$,  
$\mathcal{E}_4(u)=76.04$
\item Starting function for g.s.: \\$\cos(\pi (x^2+y^2)^{0.5}/2)$
\item Starting function for l.e.n.s.:\\ $\cos(\pi(x^2+y^2)^{0.5}/2)$ 
$\cos(2\pi(x^{2}+y^{2})^{0.5}$ $\cos(\pi (x^{2}+y^{2})^{0.5}$
\end{itemize}
\end{minipage}
\hfill\null

 Second, $V(x,y)= \frac{1}{\sqrt{(x-0.5)^2+y^2}}$ on the ball $B(0,1)$. 
Ground state solutions seem to be even with respect to a direction but are not radial. One can remark the
 work of the singularity on the level curve $1$. Least energy nodal
 solutions are just odd with respect  to  
a direction.  The mass is a little bit attracted by the side $x<0$ 
(the side minimizing the energy).\\

\begin{minipage}[h]{0.6\linewidth}
     \includegraphics[width=2.8cm, angle=270]{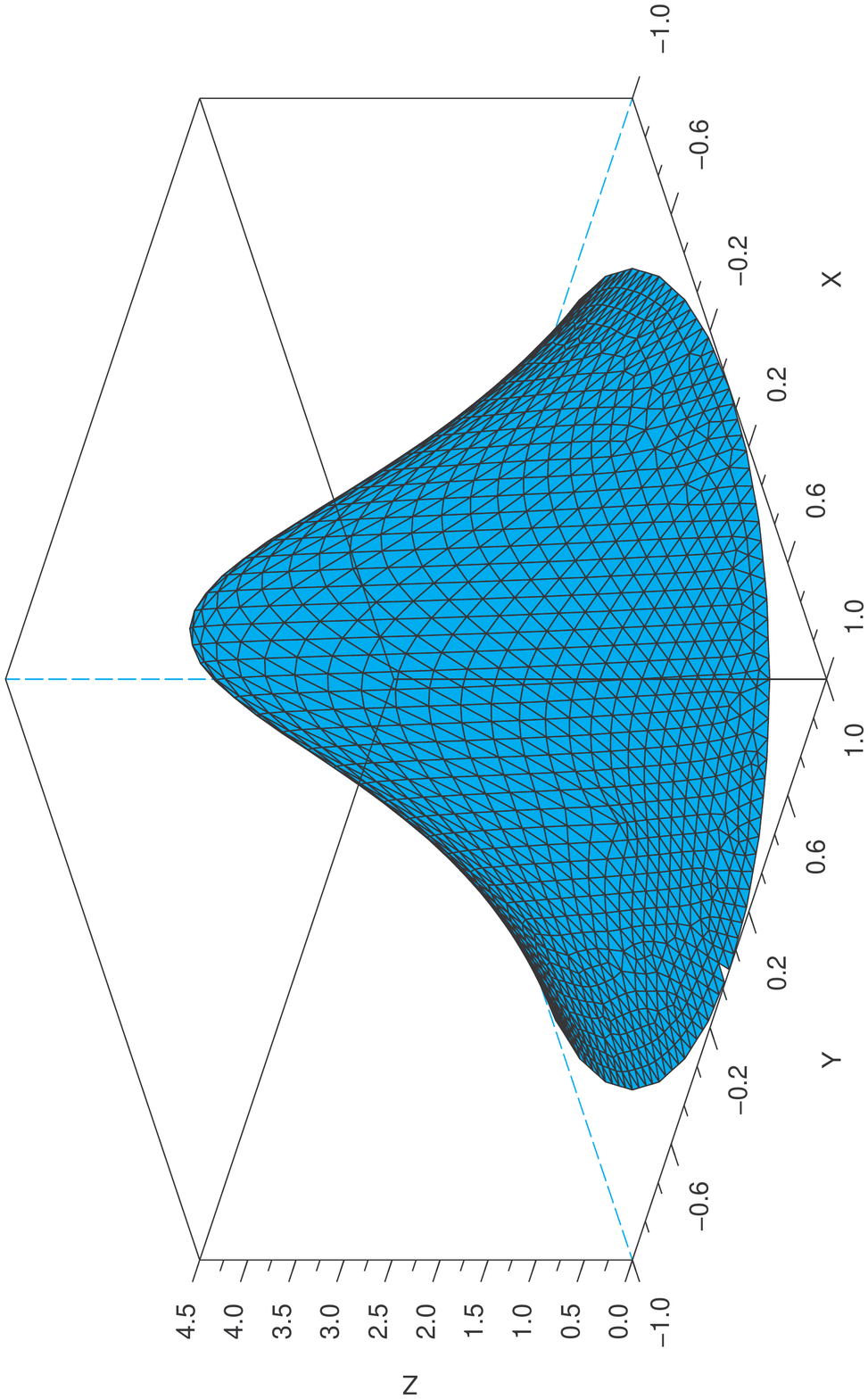}
\hfill
\includegraphics[width=2.8cm, angle = 270]{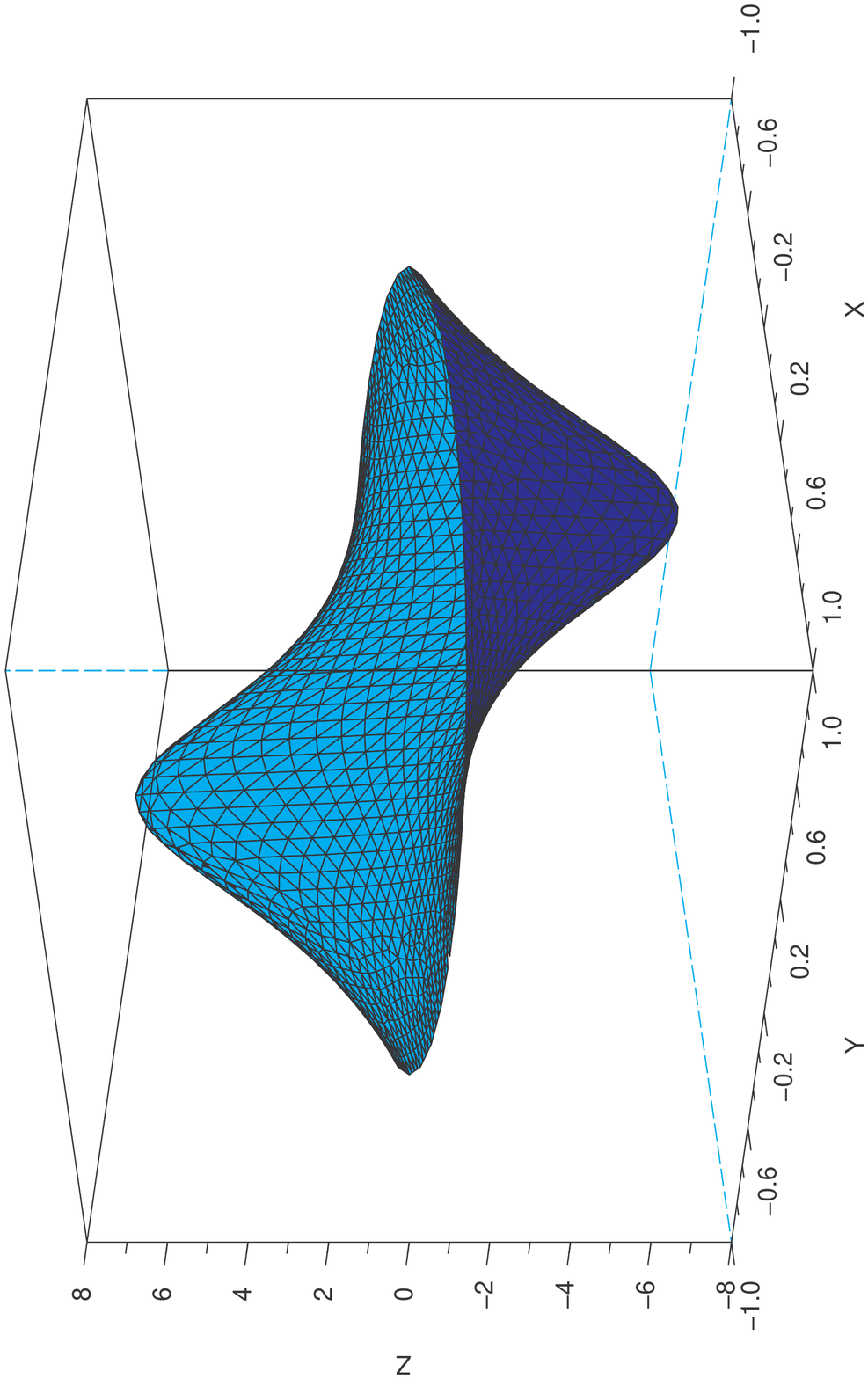}
\hfill\null
\begin{center}

\hfill\null
\end{center}
\hfill\null
\end{minipage}
\hfill
\begin{minipage}[h]{0.5\linewidth}
\begin{itemize}
\item For g.s.: $\max (u)=4.41 $, $\mathcal{E}_4 (u)= 18.74$
\item For l.e.n.s.: $\min(u)= -6.25$, $\max (u)= 6.25$,  
$\mathcal{E}_4(u)=76.23$
\item Starting function for g.s.: \\$\cos(\pi (x^2+y^2)^{0.5}/2)$
\item Starting function for l.e.n.s.: \\
$\cos(\pi (x^{2}+y^{2})^{0.5}/2)$ $ \cos(2\pi
(x^{2}+y^{2})^{0.5}$ $\cos(\pi (x^{2}+y^{2})^{0.5}$
\end{itemize}
\end{minipage}
\hfill\null

\bibliographystyle{plain}
\bibliography{these}

\begin{thebibliography}{10}

\bibitem{arioli}
G.~Arioli and A.~Koch.
\newblock Non-symmetric low-index solutions for a symmetric boundary value
  problem.
\newblock {\em preprint}, 2010.

\bibitem{bbg}
Denis Bonheure, Vincent Bouchez, and Christopher Grumiau.
\newblock Asymptotics and symmetries of ground-state and least energy nodal
  solutions for boundary-value problems with slowly growing superlinearities.
\newblock {\em Differential Integral Equations}, 22(9-10):1047--1074, 2009.

\bibitem{bbgv}
Denis Bonheure, Vincent Bouchez, Christopher Grumiau, and Jean Van~Schaftingen.
\newblock Asymptotics and symmetries of least energy nodal solutions of
  {L}ane-{E}mden problems with slow growth.
\newblock {\em Commun. Contemp. Math.}, 10(4):609--631, 2008.

\bibitem{mckenna}
Yung~Sze Choi and P.~Joseph McKenna.
\newblock A mountain pass method for the numerical solution of semilinear
  elliptic problems.
\newblock {\em Nonlinear Anal.}, 20(4):417--437, 1993.

\bibitem{cdn}
David~G. Costa, Zhonghai Ding, and John~M. Neuberger.
\newblock A numerical investigation of sign-changing solutions to superlinear
  elliptic equations on symmetric domains.
\newblock {\em J. Comput. Appl. Math.}, 131(1-2):299--319, 2001.

\bibitem{gossez}
Tomas Godoy, Jean-Pierre Gossez, and Sofia~R. Paczka.
\newblock A minimax formula for the principal eigenvalues of {D}irichlet
  problems and its applications.
\newblock In {\em Proceedings of the 2006 {I}nternational {C}onference in honor
  of {J}acqueline {F}leckinger}, volume~16 of {\em Electron. J. Differ. Equ.
  Conf.}, pages 137--154. Texas State Univ.--San Marcos, Dept. Math., San
  Marcos, TX, 2007.

\bibitem{gt}
Christopher Grumiau and Christophe Troestler.
\newblock Oddness of least energy nodal solutions on radial domains.
\newblock {\em Elec. Journal of diff. equations}, Conference 18:23--31, 2010.

\bibitem{neuberger}
John~M. Neuberger.
\newblock A numerical method for finding sign-changing solutions of superlinear
  dirichlet problems.
\newblock {\em Nonlinear World}, 4(1):73--83, 1997.

\bibitem{zhou1}
Jianxin Zhou and Youngxin Li.
\newblock A minimax method for finding multiple critical points and its
  applications to semilinear elliptic pde's.
\newblock {\em SIAM Sci.Comp.}, 23:840--865, 2001.

\bibitem{zhou2}
Jianxin Zhou and Youngxin Li.
\newblock Convergence results of a local minimax method for finding multiple
  critical points.
\newblock {\em SIAM Sci. Comp.}, 24:865--885, 2002.

\end{thebibliography}
\end{document}